\documentclass[a4paper]{amsart}
\usepackage{amsmath,amsthm}
\usepackage{amssymb}
\usepackage{microtype}
\usepackage[all]{xy}
\usepackage{color}
\usepackage{combelow}
\usepackage{enumerate}
\setcounter{tocdepth}{1}
\setcounter{MaxMatrixCols}{10}

\newtheorem{theorem}{Theorem}
\newtheorem*{claim*}{Claim}
\newtheorem*{splitting condition}{Splitting Condition}
\newtheorem*{properness condition}{Properness Condition}
\newtheorem*{theorema}{Theorem A}
\newtheorem*{theoremb}{Theorem B}
\newtheorem*{theorem1}{Theorem 1}
\newtheorem*{theorem2}{Theorem 2}
\newtheorem*{theorem3}{Theorem 3}
\newtheorem*{illustration1}{Illustration 1}
\newtheorem*{illustration2}{Illustration 2}
\newtheorem*{illustration3}{Illustration 3}

\newtheorem{corollary}{Corollary}
\newtheorem{definition}{Definition}

\newtheorem{lemma}{Lemma}
\newtheorem{proposition}{Proposition}
\newtheorem{remark}{Remark}




\newcommand{\diffto}{\xrightarrow{\raisebox{-0.2 em}[0pt][0pt]{\smash{\ensuremath{\sim}}}}}

\newcommand{\inc}{\hookrightarrow}
\newcommand{\rmap}{\longrightarrow}
\newcommand{\lmap}{\longleftarrow}


\newcommand{\lb}{\lbrace}
\newcommand{\rb}{\rbrace}

\newcommand{\V}{\mathcal{V}}
\newcommand{\G}{\mathcal{G}}
\newcommand{\rr}{\rightrightarrows}
\newcommand{\pr}{\operatorname{pr}}
\newcommand{\X}{\mathfrak{X}}

\begin{document}
\title[Normal forms for Poisson Maps and Symplectic Groupoids]{Normal forms for Poisson Maps\\and Symplectic Groupoids\\around Poisson Transversals}
\author{Pedro Frejlich}
\address{Depart.\ of Math., Utrecht University, 3508 TA, Utrecht, The Netherlands}
\email{frejlich.math@gmail.com}
\author{Ioan M\u{a}rcu\cb{t}}
\address{Radboud University Nijmegen, IMAPP, 6500 GL, Nijmegen, The Netherlands}
\email{i.marcut@math.ru.nl}
\begin{abstract}
Poisson transversals are those submanifolds in a Poisson manifold which intersect all symplectic leaves transversally and symplectically. In a previous note \cite{PT1} we proved a normal form theorem around such submanifolds. In this communication, we promote that result to a normal form theorem for Poisson maps around Poisson transversals. A Poisson map pulls a Poisson transversal back to a Poisson transversal, and our first main result states that simultaneous normal forms exist around such transversals, for which the Poisson map becomes transversally linear, and intertwines the normal form data of the transversals.

Our second main result concerns symplectic integrations. We prove that a neighborhood of a Poisson transversal is integrable exactly when the Poisson transversal itself is integrable, and in that case we prove a normal form theorem for the symplectic groupoid around its restriction to the Poisson transversal, which puts all its structure maps in normal form.

We conclude the paper by illustrating our results with examples arising from Lie algebras.
\end{abstract}
\maketitle
\tableofcontents
\setcounter{tocdepth}{1}

\section*{Introduction}

Poisson transversals are special submanifolds which play in Poisson geometry a role akin to that of symplectic submanifolds in Symplectic Geometry, and complete transversals in Foliation Theory. A Poisson transversal of a Poisson manifold $(M,\pi)$ is an embedded submanifold $X\subset M$ which intersects all symplectic leaves transversally and symplectically. These submanifolds lie at the heart of Poisson geometry, silently underpinning many important arguments and constructions.

In our previous note \cite{PT1}, we described a normal form theorem around a Poisson transversal $(X,\pi_X)$ in $(M,\pi)$, which depends only on the restriction of $\pi$ to $T^{\ast}M|_X$. Choosing a Poisson spray $\mathcal{V}$ for $\pi$, the corresponding exponential map induces the Poisson isomorphism around $X$ which puts the structure in normal form:
\begin{align}\label{eq : normal form intro}
 \exp_{\V}:(N^{\ast}X,\pi_X^{\omega_{\mathcal{V}}}) \inc (M,\pi),
\end{align}
where $\pi_X^{\omega_{\mathcal{V}}}$ is the Poisson structure corresponding to the Dirac structure $p^*(L_{\pi_X})^{\omega_{\mathcal{V}}}$ obtained as follows: by first pulling back the Dirac structure $L_{\pi_X}$  corresponding to $\pi_X$ to $N^*X$ via the map $p$, and then gauge-transforming by a certain closed two-form $\omega_{\V}$ on $N^*X$ which is symplectic on the fibers of $p$. Actually, all these objects ($\exp_{\V}$, $\omega_{\mathcal{V}}$ and $\pi_X^{\omega_{\mathcal{V}}}$) are only defined on a small open neighborhood of $X \subset N^*X$, but we omit this technicality from the notation. The procedure in \cite{PT1} for constructing normal forms as in (\ref{eq : normal form intro}) depends only on the choice of $\mathcal{V}$, and has the added benefit of allowing simultaneous normal forms for all Poisson transversals in $(M,\pi)$.

In this communication, we continue our analysis of local properties around Poisson transversals with normal form results for Poisson maps and symplectic groupoids.

That Poisson transversals behave functorially with respect to Poisson maps has already been pointed out in \cite{PT1}: a Poisson map pulls back Poisson transversals to Poisson transversals, and in fact, it pulls back the corresponding infinitesimal data pertaining to their normal forms. We prove that the two Poisson structures and the Poisson map can be put in normal form simultaneously:

\begin{theorem1}[Normal form for Poisson maps]\indent
Let $\varphi : (M_0,\pi_0) \to (M_1,\pi_1)$ be a Poisson map, and $X_1 \subset M_1$ be a Poisson transversal. Then $\varphi$ is transverse to $X_1$, $X_0:=\varphi^{-1}X_1$ is a Poisson transversal in $(M_0,\pi_0)$, $\varphi|_{X_0}:(X_0,\pi_{X_0}) \to (X_1,\pi_{X_1})$ is a Poisson map, and there exist Poisson sprays $\V_i$ with exponential maps $\mathrm{exp}_{\V_i}:(N^{\ast}X_i,\pi_{X_i}^{\omega_{\V_i}}) \inc (M_i,\pi_i)$ which fit into the commutative diagram of Poisson maps:
\[\xymatrix{
 (M_0,\pi_0) \ar[r]^{\varphi} & (M_1,\pi_1) \\
 (N^{\ast}X_0,\pi_{X_0}^{\omega_{\V_0}}) \ar[u]^{\exp_{\V_0}} \ar[r]_F & (N^{\ast}X_1,\pi_{X_1}^{\omega_{\V_1}})  \ar[u]_{\exp_{\V_1}}}\]
where $F$ is the vector bundle map:
\begin{align*}
F_{x}:=(\varphi^*|_{N^*_{\varphi(x)}X_1})^{-1} : N^{\ast}_xX_0 \rmap N^{\ast}_{\varphi(x)}X_1,
\end{align*}
and, moreover, $F$ satisfies $F^*(\omega_{\V_1})=\omega_{\V_0}$.
\end{theorem1}

Two comments are now in order. First, there is a great dearth of normal form theorems for Poisson maps in the literature: one can find some scant precedents in the normal form for moment maps on symplectic manifolds of \cite{GuillSternb84,Marle85}, or in some normal forms belonging to the theory of integrable systems (e.g. \cite{Nehorosev72,Duistermaat,Eli90}). Second, that such a simple -- and somewhat unexpected -- normal form can be proved is further testament of the central role played by Poisson transversals in Poisson geometry, and owes greatly to the canonicity of the methods they grant.

Next, we move to symplectic groupoids. As a general principle, which follows from the normal form theorem, Poisson transversals encode all the geometry of a neighborhood in the ambient manifold, and 'transverse properties' should hold for the transversal if and only if they hold true around it. We show that integrability by a symplectic groupoid is one such transverse property:
\begin{theorem2}[Integrability as a transverse property]
A Poisson transversal is integrable if and only if it has an integrable open neighborhood.
\end{theorem2}

In fact, we show much more:
\begin{theorem3}[Normal form for symplectic groupoids]

Let $(X,\pi_X)$  be a Poisson transversal in $(M,\pi)$, and consider a tubular neighborhood $M \supset E \stackrel{p}{\to} X$ in which the Poisson structure is in normal form, i.e.\ $\pi|_E=\pi_X^{\sigma}$. If $(X,\pi_X)$ is integrable by a symplectic groupoid $(\mathcal{G}_X,\omega_X) \rr (X,\pi_X)$, then:

\begin{enumerate}[a)]
 \item A symplectic groupoid integrating $\pi_X^{\sigma}$ is $(\mathcal{G}_X^E,\omega_E) \rr (E,\pi_X^{\sigma})$, where:
\[
 \mathcal{G}_X^E:=\mathcal{G}_X \times_{\mathcal{P}(X)}\mathcal{P}(E), \ \ \ \ \omega_E:=\mathbf{p}^{\ast}(\omega_{X})+\mathbf{s}^{\ast}(\sigma)-\mathbf{t}^{\ast}(\sigma).\]
 Here $\mathcal{P}(\mathcal{M}) \rr \mathcal{M}$ stands for the pair groupoid of a manifold $\mathcal{M}$, and $\mathbf{p}:\mathcal{G}_X^E \to \mathcal{G}_X$ stands for the canonical groupoid map.
 \item The restriction to $E$ of any symplectic groupoid $(\mathcal{G},\omega_{\mathcal{G}}) \rr (M,\pi)$ integrating $\pi$ is isomorphic to the model $(\mathcal{G}_X^E,\omega_E)$ corresponding to $\mathcal{G}_X:=\mathcal{G}|_X$, $\omega_X:=\omega_{\mathcal{G}}|_{\mathcal{G}_X}$.
\end{enumerate}
\end{theorem3}

We conclude the paper by illustrating our results in the setting of linear Poisson structures, i.e., Lie algebras. While in this linear setting the conclusions of Theorems 1-3 are well-known, we strive to show how a Poisson-transversal perspective can shed new light on even these classical results.

\medskip

\subsection*{Acknowledgements}
We would like to thank Marius Crainic for the many good conversations, and for his insight that Lemma \ref{lem : Poisson pulls back Poisson transversals} should be a shadow of a normal form theorem for Poisson maps. We would also like to thank David Mart\'{i}nez-Torres and Rui Loja Fernandes for useful discussions.

The first author was supported by the NWO Vrije Competitie project ``Flexibility and Rigidity of Geometric Structures'' no.\ 613.001.101 and the second was partially supported by the NSF grant DMS 14-05671.

\section{Preliminaries on Poisson transversals}

Recall from \cite{PT1} that an embedded submanifold $X \subset M$ in a Poisson manifold $(M,\pi)$ is said to be a {\bf Poisson transversal} if it induces a splitting:
\begin{equation}\label{eq : splitting}
TX \oplus NX=TM|_X,
\end{equation}
where $NX:=\pi^{\sharp}(N^{\ast} X)\subset TM|_{X}$ will be called the {\bf embedded normal bundle}. As explained in \cite{PT1}, the restriction $\pi|_{X}$ of $\pi$ to $T^{\ast}M|_X$ decomposes as:
\[\pi|_{X}=\pi_{X}+w_{X},\]
where $\pi_{X}\in \Gamma(\bigwedge^2TX)$ is a Poisson structure and $w_X \in \Gamma(\bigwedge^2N X)$ is a non-degenerate bivector. The main result of \cite{PT1} is that pair $(\pi_X,w_X)$ encodes the structure of $\pi$ around $X$. To explain this, recall:

\begin{definition}
Let $(M,\pi)$ be a Poisson manifold. A vector field $\V \in \X(T^{\ast}M)$ is a {\bf spray} for $\pi$ if:
\begin{enumerate}
 \item $m_t^{\ast}(\V)=t\V$, for all $t>0$;
 \item $\pr_{\ast}\V(\xi)=\pi^{\sharp}(\xi)$, for all $\xi\in T^{\ast}M$,
\end{enumerate}
where $m_t:T^{\ast}M \to T^{\ast}M$ denotes the map of scalar multiplication by $t$.
\end{definition}

The following result played a crucial role in the proof of the normal form theorem in \cite{PT1} :

\begin{theorema}\cite{CrMar11}
Let $\pi$ be Poisson and denote by $\phi_t$ the time-$t$ (local) flow of a spray $\V$ for $\pi$. Then there is an open $\Sigma_{\V} \subset T^{\ast}M$ around $M$ with the property that:
\begin{enumerate}
 \item $\phi$ is defined on $\Sigma_{\V} \times [0,1]$;
 \item The closed two-form $\Omega_{\V}:=\int_0^1\phi_t^{\ast}\omega_{\mathrm{can}}dt$ is symplectic on $\Sigma_{\V}$;
 \item The submersions
 \[(M,\pi)\stackrel{\pr}{\lmap}(\Sigma_{\V},\Omega_{\V})\stackrel{\exp_{\V}}{\rmap}(M,-\pi)\]give a full dual pair, where $\exp_{\V}:=\pr\circ\phi_1$.
\end{enumerate}
\end{theorema}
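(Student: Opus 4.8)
The plan is to treat the three assertions in turn, extracting everything from the behaviour of the spray flow along the zero section $M\subset T^{\ast}M$.

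\emph{Flow existence.} First I would observe that $\V$ vanishes identically on the zero section. Indeed, condition (2) gives $\pr_{\ast}\V(0_x)=\pi^{\sharp}(0_x)=0$, so $\V(0_x)$ is vertical; writing the homogeneity (1) as $\V(m_t\xi)=t\,(dm_t)_{\xi}\V(\xi)$ and evaluating at $\xi=0_x$, where $(dm_t)_{0_x}$ acts as the identity on the horizontal summand $T_xM$ and as multiplication by $t$ on the vertical summand $T^{\ast}_xM$, forces $\V(0_x)=0$. Hence $M$ consists of fixed points of $\phi_t$, the flow is defined for all times along $M$, and the flow-out (escape) lemma produces an open $\Sigma_{\V}\supset M$ on which $\phi$ is defined for $t\in[0,1]$. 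This is assertion (1).

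\emph{Non-degeneracy.} Closedness of $\Omega_{\V}$ is immediate, since $d$ commutes with $\int_0^1(\cdot)\,dt$ and each $\phi_t^{\ast}\omega_{\mathrm{can}}$ is closed. For non-degeneracy I would compute $\Omega_{\V}$ along $M$, where it suffices by openness. Since $0_x$ is a fixed point, $(\phi_t^{\ast}\omega_{\mathrm{can}})_{0_x}=A_t^{\ast}(\omega_{\mathrm{can}})_{0_x}$ with $A_t:=(d\phi_t)_{0_x}=\exp(tL)$, $L:=(d\V)_{0_x}$. The linear part $L$ is pinned down by the two spray axioms: differentiating (2) shows that the horizontal output block of $L$ is the linearisation of $\pi^{\sharp}$ at the zero covector, namely $0$ on the horizontal summand and $\pi^{\sharp}_x:T^{\ast}_xM\to T_xM$ on the vertical one; while the infinitesimal homogeneity $[\mathcal{E},\V]=\V$ (for the Euler vector field $\mathcal{E}$), linearised at the common zero $0_x$, kills the remaining blocks, so that $L=\left(\begin{smallmatrix}0&\pi^{\sharp}_x\\0&0\end{smallmatrix}\right)$ is $2$-step nilpotent and $A_t=\mathrm{Id}+tL$. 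A direct computation then yields $(\phi_t^{\ast}\omega_{\mathrm{can}})_{0_x}=\left(\begin{smallmatrix}0&\mathrm{Id}\\-\mathrm{Id}&-2t\pi^{\sharp}_x\end{smallmatrix}\right)$ and, after integration, $(\Omega_{\V})_{0_x}=\left(\begin{smallmatrix}0&\mathrm{Id}\\-\mathrm{Id}&-\pi^{\sharp}_x\end{smallmatrix}\right)$, which is non-degenerate. Shrinking $\Sigma_{\V}$ gives assertion (2).

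\emph{Dual pair.} That $\pr$ and $\exp_{\V}=\pr\circ\phi_1$ are submersions near $M$ follows because $\pr$ is one and $d\phi_1=\mathrm{Id}+L$ is invertible; the dimension count $\dim\Sigma_{\V}=2\dim M=\dim\ker d\pr+\dim\ker d\exp_{\V}$ then reduces ``full dual pair'' to two facts: that the two projections are Poisson, resp.\ anti-Poisson, and that their fibres are $\Omega_{\V}$-orthogonal. The main obstacle is the realization (Poisson) property $\pr_{\ast}\Omega_{\V}^{-1}=\pi$, which, as the non-invariance of $\Omega_{\V}$ under fibrewise rescaling shows, genuinely does not reduce to the zero-section computation above. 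To establish it I would exploit the scaling relation $\phi_s\circ m_t=m_t\circ\phi_{st}$ coming from (1), which yields $m_{\lambda}^{\ast}\Omega_{\V}=\int_0^{\lambda}\phi_u^{\ast}\omega_{\mathrm{can}}\,du$, together with the primitive identity $\phi_1^{\ast}\omega_{\mathrm{can}}=\omega_{\mathrm{can}}+d\,\iota_{\V}\Omega_{\V}$, and feed these into a Moser-type computation of the Hamiltonian vector field of $\pr^{\ast}f$, showing $\pr_{\ast}X^{\Omega_{\V}}_{\pr^{\ast}f}=\pi^{\sharp}(df)$ for all $f$. Once $\pr$ is known to be a realization, the anti-Poisson property of $\exp_{\V}$ and the $\Omega_{\V}$-orthogonality of the fibres follow from Weinstein's completion of a realization to a dual pair, identifying $\ker d\exp_{\V}$ with $(\ker d\pr)^{\perp_{\Omega_{\V}}}$ via the time-one flow. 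I expect this verification of the realization property away from $M$ --- rather than the purely linear-algebraic steps (1)--(2) --- to be the real content of the theorem.
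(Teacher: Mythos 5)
Your treatment of assertions (1) and (2) is correct and complete, and it coincides with the standard computation: the two spray axioms force $\V$ to vanish on the zero section, the linearization of $\V$ at $0_x$ is the strictly upper-triangular map with only block $\pi^{\sharp}_x$ (the Euler-field bracket argument is right), and the resulting formula $(\Omega_{\V})_{0_x}((u,\alpha),(v,\beta))=\alpha(v)-\beta(u)-\pi(\alpha,\beta)$ is nondegenerate, so $\Omega_{\V}$ is symplectic after shrinking. The two identities you record in Step 3 are also true: $m_{\lambda}^{\ast}\Omega_{\V}=\int_0^{\lambda}\phi_u^{\ast}\omega_{\mathrm{can}}\,du$ follows from $\phi_u\circ m_{\lambda}=m_{\lambda}\circ\phi_{\lambda u}$ and $m_{\lambda}^{\ast}\omega_{\mathrm{can}}=\lambda\,\omega_{\mathrm{can}}$, and $\phi_1^{\ast}\omega_{\mathrm{can}}=\omega_{\mathrm{can}}+d\iota_{\V}\Omega_{\V}$ follows since $\V$ is invariant under its own flow, so $\phi_t^{\ast}(\iota_{\V}\omega_{\mathrm{can}})=\iota_{\V}\phi_t^{\ast}\omega_{\mathrm{can}}$. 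Note, for calibration, that the paper itself offers no proof of Theorem A --- it is quoted from \cite{CrMar11} --- so the only available comparison is with that reference (see also \cite{Thesis} for a detailed exposition).

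The genuine gap is exactly where you yourself locate the ``real content'': assertion (3) is never proved. You name a ``Moser-type computation'' and assert its outcome $\pr_{\ast}X_{\pr^{\ast}f}=\pi^{\sharp}(df)$, but no such computation is carried out, and your two identities cannot suffice on their own: both are automatic consequences of the definition of $\Omega_{\V}$ (plus homogeneity of the flow) and hold before any nondegeneracy or realization property is known, whereas the realization property is a pointwise constraint on $\Omega_{\V}^{-1}$ away from the zero section that uses the second spray axiom at every $\xi$, not just along $M$. Likewise, the appeal to ``Weinstein's completion of a realization to a dual pair'' is circular here: that construction only produces, locally, an anti-Poisson submersion onto the leaf space of the $\Omega_{\V}$-orthogonal foliation of the $\pr$-fibres, and identifying that second leg with the concrete map $\exp_{\V}=\pr\circ\phi_1$ (equivalently, proving $\ker d\exp_{\V}=(\ker d\pr)^{\perp_{\Omega_{\V}}}$, which you assert ``via the time-one flow'' without argument) is precisely the missing content. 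For comparison, the proof in the cited source extracts all of (3) from a single stronger differential identity for the spray flow, proved by differentiating in $t$ and using both spray axioms: $\phi_t$ carries the pull-back Dirac structure $\pr^{\ast}(L_{\pi})$ to its gauge transform by $\sigma_t:=\int_0^t\phi_s^{\ast}\omega_{\mathrm{can}}\,ds$; evaluated at $t=1$ and combined with your Step 2, this simultaneously yields that $\pr$ is Poisson, that $\exp_{\V}$ is anti-Poisson, and that the two fibrations are $\Omega_{\V}$-orthogonal. Some identity of this strength (or an honest completion of your Hamiltonian-vector-field computation) is unavoidable; without it the theorem is not established.
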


Let $X\subset (M,\pi)$ be a Poisson transversal with associated pair $(\pi_X,w_X)$. We denote by $\Upsilon(w_X)$ the space of all closed two-forms $\sigma \in \Omega^2(N^{\ast}X)$ which along $X$ satisfy $\sigma|_X=-w_X\in \Gamma(\bigwedge^2 NX)$, where we identify $\bigwedge^2 NX$ with the space of vertical two-forms in $\bigwedge^2 T^*(N^*X)|_X$. To each $\sigma \in \Upsilon(w_X)$ there corresponds a local model of $\pi$ around $X$, which, in Dirac-geometric terms, is described as the Poisson structure $\pi_X^{\sigma}$ corresponding to the Dirac structure $\pr^*(L_{\pi_X})^{\sigma}$. As shown in \cite{PT1}, $\pi_X^{\sigma}$ is defined in a neighborhood of $X$ in $N^*X$, and for any other $\sigma' \in \Upsilon(w_X)$, $\pi_X^{\sigma}$ and $\pi_X^{\sigma'}$ are Poisson diffeomorphic around $X$, by a diffeomorphism that fixes $X$ to first order.

\begin{theoremb}\cite{PT1}
In the notation of Theorem A, the two-form $\omega_{\V}:=-\Omega_{\V}|_{N^{\ast}X}$ belongs to $\Upsilon(w_X)$, and the exponential map yields a Poisson embedding around $X$,
 \[
 \exp_{\V}:(N^{\ast}X,\pi_X^{\omega_{\V}}) \inc (M,\pi).
 \]
\end{theoremb}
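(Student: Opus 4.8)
The plan is to treat the dual pair of Theorem~A as given and verify the two assertions separately. That $\omega_{\V}\in\Upsilon(w_X)$ splits into closedness and a normalization. Closedness of $\omega_{\V}=-\Omega_{\V}|_{N^{\ast}X}$ is immediate, since each $\phi_t^{\ast}\omega_{\mathrm{can}}$ is closed, hence so is the average $\Omega_{\V}$, and restriction to a submanifold preserves closedness. For the normalization $\omega_{\V}|_X=-w_X$ I would argue pointwise along the zero section: the homogeneity condition $m_t^{\ast}\V=t\V$ forces $\V$ to vanish on the zero section (alternatively, condition (2) gives $\pr_{\ast}\V|_{0_x}=\pi^{\sharp}(0)=0$), so $\phi_t$ fixes $M\subset T^{\ast}M$ and only the fiberwise linearization of $\phi_t$ along $M$ is needed. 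Plugging this into $\Omega_{\V}=\int_0^1\phi_t^{\ast}\omega_{\mathrm{can}}\,dt$ and restricting to the vertical directions $N_x^{\ast}X$, the anchor condition $\pr_{\ast}\V=\pi^{\sharp}$ identifies the vertical-vertical block of $\Omega_{\V}|_{N^{\ast}X}$ at $x$ with the non-degenerate part $w_X$ of $\pi|_X$; this is the only point where the decomposition $\pi|_X=\pi_X+w_X$ enters.

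Next I would check that $\exp_{\V}|_{N^{\ast}X}$ is an embedding near $X$. It is the identity on the zero section, so it is enough to see that its differential is invertible along $X$. Splitting $T_x(N^{\ast}X)=T_xX\oplus N_x^{\ast}X$ into horizontal and vertical summands, the horizontal block is the identity onto $T_xX$, while on the vertical summand the spray axioms give $\mathrm{d}(\exp_{\V})_x(\xi)=\pi^{\sharp}(\xi)$, which is an isomorphism onto the embedded normal bundle $N_xX=\pi^{\sharp}(N_x^{\ast}X)$. Since $TX\oplus NX=TM|_X$ by the defining splitting of the Poisson transversal, $\mathrm{d}(\exp_{\V})_x$ is invertible, so $\exp_{\V}$ carries a neighborhood of $X$ in $N^{\ast}X$ diffeomorphically onto a neighborhood of $X$ in $M$.

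The core is the Poisson compatibility, which I would formulate in Dirac-geometric terms: writing $j:N^{\ast}X\inc\Sigma_{\V}$ for the inclusion, the goal is to show that the backward image $(\exp_{\V}|_{N^{\ast}X})^{!}L_{\pi}=j^{!}\exp_{\V}^{!}L_{\pi}$ coincides with the model Dirac structure $\pr^{\ast}(L_{\pi_X})^{\omega_{\V}}$. Here the dual pair does the work. Since $\exp_{\V}:(\Sigma_{\V},\Omega_{\V})\to(M,-\pi)$ is an anti-Poisson submersion, solving $\mathrm{d}\exp_{\V}(v)=\pi^{\sharp}(\alpha)$ and using that $\Omega_{\V}$ is symplectic one computes
\[
\exp_{\V}^{!}L_{\pi}=\bigl\{\,(v'+k,\,-\iota_{v'}\Omega_{\V}) : v'\in\ker\mathrm{d}\,\pr,\ k\in\ker\mathrm{d}\exp_{\V}\,\bigr\},
\]
where I have used the full-dual-pair identity $\ker\mathrm{d}\,\pr=(\ker\mathrm{d}\exp_{\V})^{\perp_{\Omega_{\V}}}$ (the precise signs, which I suppress, are dictated by the symplectic convention). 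Restricting along $j$ now replaces $\Omega_{\V}$ by $\Omega_{\V}|_{N^{\ast}X}=-\omega_{\V}$, so the gauge two-form $\omega_{\V}$ materializes --- and the minus sign in $\omega_{\V}=-\Omega_{\V}|_{N^{\ast}X}$ is exactly what turns this leg, which a priori lands in $-\pi$, into a genuine Poisson map into $(M,\pi)$ after gauging --- while the complementary leg $\pr|_{N^{\ast}X}:N^{\ast}X\to X$ contributes the fibered structure $\pr^{\ast}(L_{\pi_X})$ over the transversal. Assembling the two yields $(\exp_{\V}|_{N^{\ast}X})^{!}L_{\pi}=\pr^{\ast}(L_{\pi_X})^{\omega_{\V}}$, which is precisely the claim that $\exp_{\V}$ intertwines $\pi_X^{\omega_{\V}}$ and $\pi$.

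I expect the main obstacle to be exactly this last identification: tracking how $L_{\pi}$ is transported through both legs of the dual pair and matching it on the nose with the gauge transform $\pr^{\ast}(L_{\pi_X})^{\omega_{\V}}$, keeping the $\pi_X$- and $w_X$-parts cleanly separated. The most reliable route is a pointwise comparison of the two Lagrangian subspaces of $T(N^{\ast}X)\oplus T^{\ast}(N^{\ast}X)$ along $X$, where the symplectic orthogonality of the $\pr$- and $\exp_{\V}$-fibers lets one match them summand by summand; a homogeneity/first-order argument along $X$, as in Theorem~A, then upgrades equality along $X$ to equality on a full neighborhood of the zero section. A secondary, purely bookkeeping, difficulty is that $\exp_{\V}$, $\Omega_{\V}$ and $\pi_X^{\omega_{\V}}$ live only near $X$, so each step must be confined to a sufficiently small neighborhood of the zero section.
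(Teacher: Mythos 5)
First, a caveat on the comparison: this paper does not actually prove Theorem~B --- it imports it from \cite{PT1} --- so your attempt can only be measured against the dual-pair strategy that \cite{PT1} (and Theorem~A) supplies, and on that score your outline is the right one: closedness of $\omega_{\V}$ is trivial; the fiberwise computation of $\Omega_{\V}$ along the zero section via the homogeneity $m_t^*\V=t\V$ and the anchor condition does give the vertical-vertical block $w_X$ (one small omission: $\Upsilon(w_X)$ requires the \emph{full} restriction $\sigma|_X$ to be the purely vertical form $-w_X$, so you must also note that the mixed blocks vanish, which is immediate because conormal covectors annihilate $TX$); the invertibility of $d(\exp_{\V})$ along $X$ via $TX\oplus\pi^{\sharp}(N^*X)=TM|_X$ is correct; and your displayed formula for $\exp_{\V}^{!}L_{\pi}$ is correct --- the map $(v',k)\mapsto (v'+k,-\iota_{v'}\Omega_{\V})$ is injective, lands in the backward image because $\ker d\pr=(\ker d\exp_{\V})^{\perp_{\Omega_{\V}}}$, and a dimension count gives equality.

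The one genuine flaw is your final step: you propose to verify the identity $(\exp_{\V}|_{N^{\ast}X})^{!}L_{\pi}=\pr^{\ast}(L_{\pi_X})^{\omega_{\V}}$ by comparing the two Lagrangian subspaces only \emph{along} $X$ and then invoking ``a homogeneity/first-order argument'' to upgrade to a neighborhood. No such propagation principle exists: two Dirac structures (e.g.\ graphs of two closed two-forms) can agree along a submanifold to first order and differ nearby, and the spray homogeneity relates $\Omega_{\V}$ at $s\xi$ to time-rescaled flows at $\xi$, not to the identity you need. The good news is that the upgrade is unnecessary, because your pointwise comparison works verbatim at \emph{every} $\xi\in N^{\ast}X$ near $X$, not just on the zero section: gauging your formula by $j^*\Omega_{\V}=-\omega_{\V}$ (with $j:N^{\ast}X\inc\Sigma_{\V}$) turns the covector $-\iota_{v'}\Omega_{\V}$ into $\iota_{k}\Omega_{\V}$; since $k\in\ker d\exp_{\V}=(\ker d\pr)^{\perp_{\Omega_{\V}}}$, one has $\iota_k\Omega_{\V}=\pr^*\delta$ for a unique $\delta\in T^*_xM$ with $x=\pr(\xi)\in X$, and the Poisson leg $\pr$ gives $\pr_*k=\pi^{\sharp}\delta$; tangency of $u=v'+k$ to $N^{\ast}X$ forces $\pi^{\sharp}\delta\in T_xX$, and the decomposition $\pi|_X=\pi_X+w_X$ together with nondegeneracy of $w_X$ forces $\delta\in\operatorname{Ann}(NX)$ and $\pi^{\sharp}\delta=\pi_X^{\sharp}(\delta|_{TX})$, exhibiting the element as lying in $(\pr|_{N^*X})^{!}L_{\pi_X}$; an inclusion of Lagrangian subspaces is an equality. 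Note that this argument uses $\pi$ only at the basepoint $x\in X$, so nothing is special about the zero section; with your last step replaced by this pointwise verification (and the suppressed sign conventions fixed once and for all), your proof is complete.
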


\begin{remark}\label{rem : locally defined}
In Theorem B, $\exp_{\V}$, and $\omega_{\V}$ are defined only on small enough neighborhoods of $X$ in $N^*X$, but we still write $\exp_{\V}:N^{\ast}X\to M$, and $\omega_{\V}\in \Upsilon(w_X)$. This convention will be used throughout Section \ref{sec : normal form for poisson maps around transversals}, also for other maps and tensors, as it simplifies notation considerably.
\end{remark}

\section{Normal form for Poisson maps}\label{sec : normal form for poisson maps around transversals}

The result below is a the first indication for a normal form theorem for Poisson maps should hold around Poisson transversals; we refer the reader to \cite{PT1} for a proof:

\begin{lemma}\label{lem : Poisson pulls back Poisson transversals}
Let $\varphi:(M_0,\pi_0)\to (M_1,\pi_1)$ be a Poisson map and $X_1\subset M_1$ be a Poisson transversal. Then:
\begin{enumerate}
 \item $\varphi$ is transverse to $X_1$;
 \item $X_0:=\varphi^{-1}(X_1)$ is also a Poisson transversal;
 \item $\varphi$ restricts to a Poisson map $\varphi|_{X_0}:(X_0,\pi_{X_0}) \to (X_1,\pi_{X_1})$;
 \item The differential of $\varphi$ along $X_0$ restricts to a fibrewise linear isomorphism between embedded normal bundles $\varphi_*|_{NX_0}:NX_0\to NX_1$;
 \item The map $F:N^*X_0\to N^*X_1$, $F(\xi)=(\varphi^*)^{-1}(\xi)$, $\xi\in N^*X_0$ is a fibrewise linear symplectomorphism between the symplectic vector bundles
 \[F:(N^*X_0,w_{X_0})\to (N^*X_1,w_{X_1}).\]
\end{enumerate}
\end{lemma}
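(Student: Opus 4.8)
My plan is to derive all five assertions from the single infinitesimal form of the Poisson condition: for every $x\in M_0$ with $y:=\varphi(x)$,
\[
\varphi_{*,x}\circ \pi_0^{\sharp}|_x\circ \varphi_x^{*}=\pi_1^{\sharp}|_y:T^{\ast}_yM_1\rmap T_yM_1,
\]
where $\varphi_x^{*}:T^{\ast}_yM_1\to T^{\ast}_xM_0$ is the codifferential. The strategy is to feed the conormal directions of $X_1$ into this master relation and read off everything. Throughout I would use the characterization of a Poisson transversal $X$ as a submanifold along which $\pi^{\sharp}$ restricts to an isomorphism $\pi^{\sharp}:N^{\ast}X\diffto NX$ onto a complement of $TX$, equivalently along which the bivector splits cleanly, $\pi|_X=\pi_X+w_X$ with $\pi_X\in\Gamma(\bigwedge^2 TX)$, $w_X\in\Gamma(\bigwedge^2 NX)$ non-degenerate, and no mixed $TX\wedge NX$ term.

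The first and decisive step (items 1 and 2) is to show that $\varphi^{*}$ is injective on conormals: if $x\in X_0$, $y=\varphi(x)$ and $\alpha\in N^{\ast}_yX_1$ satisfies $\varphi^{*}_x\alpha=0$, then the master relation gives $\pi_1^{\sharp}\alpha=\varphi_{*}\pi_0^{\sharp}\varphi^{*}\alpha=0$, so $\alpha=0$ because $\pi_1^{\sharp}$ is injective on $N^{\ast}X_1$. Since transversality of $\varphi$ to $X_1$ at $x$ is exactly the vanishing $\ker\varphi_x^{*}\cap N^{\ast}_yX_1=0$ (this intersection is the annihilator of $\operatorname{im}\varphi_{*,x}+T_yX_1$), item 1 follows at once. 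Hence $X_0=\varphi^{-1}(X_1)$ is embedded with conormal bundle $N^{\ast}_xX_0=\varphi^{*}_x(N^{\ast}_yX_1)$ (the standard description of a transverse preimage), and $\varphi^{*}$ restricts to a fibrewise isomorphism $N^{\ast}X_1\diffto N^{\ast}X_0$. For item 2 I would transport $w$: for $\beta_1,\gamma_1\in N^{\ast}_yX_1$ the master relation yields
\[
\la \varphi^{*}\gamma_1,\pi_0^{\sharp}\varphi^{*}\beta_1\ra=\la\gamma_1,\varphi_{*}\pi_0^{\sharp}\varphi^{*}\beta_1\ra=\la\gamma_1,\pi_1^{\sharp}\beta_1\ra,
\]
so the form $\pi_0$ induces on $N^{\ast}X_0$ (which is $w_{X_0}$, since $\pi_{X_0}$ kills conormals) is the pullback by $\varphi^{*}$ of the form $\pi_1$ induces on $N^{\ast}X_1$ (namely $w_{X_1}$); as the latter is non-degenerate, so is the former, and $X_0$ is a Poisson transversal. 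This same computation is precisely item 5: $\varphi^{*}:(N^{\ast}X_1,w_{X_1})\to(N^{\ast}X_0,w_{X_0})$ is a fibrewise symplectomorphism, hence so is its inverse $F$. For item 4 I would read the master relation on conormals as a commuting square: for $v=\pi_0^{\sharp}\varphi^{*}\beta_1\in N_xX_0$ one gets $\varphi_{*}v=\pi_1^{\sharp}\beta_1\in N_yX_1$, so $\varphi_{*}$ maps $NX_0$ into $NX_1$ and factors there as $\pi_1^{\sharp}\circ(\varphi^{*})^{-1}\circ(\pi_0^{\sharp})^{-1}$, a composite of the isomorphisms just produced.

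Item 3 is where I would exploit the clean splitting. Because $\pi_i|_{X_i}=\pi_{X_i}+w_{X_i}$ has no mixed term, the anchor $\pi_i^{\sharp}$ is block-diagonal for the dual decomposition $T^{\ast}M_i|_{X_i}=(NX_i)^{\circ}\oplus N^{\ast}X_i$, with $(NX_i)^{\circ}\cong T^{\ast}X_i$ block equal to $\pi_{X_i}^{\sharp}$. Dualizing item 4 shows $\varphi^{*}$ carries $(N_yX_1)^{\circ}$ into $(N_xX_0)^{\circ}$, and under the identifications $(NX_i)^{\circ}\cong T^{\ast}X_i$ this restricted codifferential is exactly $(\varphi|_{X_0})^{*}$. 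Lifting $\xi_1\in T^{\ast}_yX_1$ to $\bar\xi_1\in(N_yX_1)^{\circ}$ and projecting the master relation to tangent directions then gives
\[
(\varphi|_{X_0})_{*}\,\pi_{X_0}^{\sharp}\,(\varphi|_{X_0})^{*}\xi_1=\varphi_{*}\pi_0^{\sharp}\varphi^{*}\bar\xi_1=\pi_1^{\sharp}\bar\xi_1=\pi_{X_1}^{\sharp}\xi_1,
\]
which is the Poisson condition for $\varphi|_{X_0}$.

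I expect the crux to be the injectivity of $\varphi^{*}$ on conormals in the first step: once that is in hand, transversality, the identification of $N^{\ast}X_0$, and items 2, 4 and 5 all unwind formally from the master relation. The only genuinely delicate bookkeeping lies in item 3, where one must verify that $\varphi_{*}$ and $\varphi^{*}$ respect the tangent/normal decompositions (and their duals) on both sides and that the block-diagonal form of the anchor is available; but these structural features are exactly what the Poisson-transversal hypothesis guarantees, so no further analytic input is needed.
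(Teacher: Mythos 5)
Your proof is correct: each step checks out, from the injectivity of $\varphi^{*}$ on conormals via the relation $\varphi_{*}\circ\pi_0^{\sharp}\circ\varphi^{*}=\pi_1^{\sharp}$ (which gives transversality, since $\ker\varphi^{*}\cap N^{*}X_1$ is indeed the annihilator of $\operatorname{im}\varphi_{*}+TX_1$), through the identification $N^{*}X_0=\varphi^{*}(N^{*}X_1)$ and the transport of the nondegenerate form $w_{X_1}$ to $w_{X_0}$ (using the equivalent characterization of a Poisson transversal by nondegeneracy of $\pi$ on the conormal bundle), to the block-diagonality of $\pi_i^{\sharp}$ with respect to $T^{*}M_i|_{X_i}=(NX_i)^{\circ}\oplus N^{*}X_i$ that yields item 3. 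The paper itself omits the proof and refers to \cite{PT1}, where the argument is the same infinitesimal computation from the master relation combined with the splitting $\pi|_{X}=\pi_X+w_X$, so your approach is essentially the one on record.
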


We are ready to state the main result of this section. Consider now the same setting as in Lemma \ref{lem : Poisson pulls back Poisson transversals}:
$\varphi:(M_0,\pi_0) \to (M_1,\pi_1)$ is a Poisson map, $X_1 \subset (M_1,\pi_1)$ is a Poisson transversal, and consider the fibrewise symplectomorphism
\begin{equation}\label{EQ : map F}
F:(N^*X_0,w_{X_0})\to (N^*X_1, w_{X_1}).
\end{equation}

\begin{theorem}[Normal form for Poisson maps]\label{thm : Normal Form for Poisson maps}\indent
There are sprays $\V_0$ for $\pi_0$, and $\V_1$ for $\pi_1$, so that under the induced exponentials $\exp_{\V_i}:(N^{\ast}X_i,\pi_{X_i}^{\omega_{\V_i}}) \inc (M_i,\pi_i)$, the map $\varphi$ corresponds to the bundle map $F$, and $F^*(\omega_{\V_1})=\omega_{\V_0}$. In particular, we have a commutative diagram of Poisson maps:
\[\xymatrix{
 (M_0,\pi_0) \ar[r]^{\varphi} & (M_1,\pi_1) \\
 (N^{\ast}X_0,\pi_{X_0}^{\omega_{\V_0}}) \ar[u]^{\exp_{\V_0}} \ar[r]^F & (N^{\ast}X_1,\pi_{X_1}^{\omega_{\V_1}})  \ar[u]_{\exp_{\V_1}}}\]
\end{theorem}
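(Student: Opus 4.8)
My plan is to reduce both assertions --- the intertwining $\varphi\circ\exp_{\V_0}=\exp_{\V_1}\circ F$ and the identity $F^*\omega_{\V_1}=\omega_{\V_0}$ --- to a single statement about a compatible choice of sprays, and then to read off the Poisson diagram. The geometric object that organizes everything is the \emph{cotangent lift relation}
\[
 R_\varphi := \{(\varphi^*_x\beta,\beta) : x\in M_0,\ \beta\in T^*_{\varphi(x)}M_1\}\ \subset\ T^*M_0\times T^*M_1,
\]
i.e.\ (up to sign) the conormal bundle of the graph of $\varphi$. It is a conic Lagrangian submanifold of $(T^*M_0\times T^*M_1,\ \omega_{\mathrm{can}}^0-\omega_{\mathrm{can}}^1)$, it is diffeomorphic to $\varphi^*T^*M_1$, and --- crucially --- its restriction to conormal directions over $X_0$ is exactly the graph of $F$, since $F_x$ inverts $\varphi^*_x$ on conormal bundles by part (5) of Lemma~\ref{lem : Poisson pulls back Poisson transversals}.

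The heart of the argument is to produce sprays $\V_0$ for $\pi_0$ and $\V_1$ for $\pi_1$ (with signs following the conventions of Theorems A and B) whose product vector field $(\V_0,\V_1)\in\X(T^*M_0\times T^*M_1)$ is \emph{tangent} to $R_\varphi$; equivalently, $\V_0$ and $\V_1$ are related through the comorphism $\varphi^*\colon\varphi^*T^*M_1\to T^*M_0$. Granting this, both conclusions follow formally. For the intertwining, tangency makes the flow $\phi^0_t\times\phi^1_t$ of $(\V_0,\V_1)$ preserve $R_\varphi$; starting from $(\xi,F_x\xi)\in R_\varphi$ with $\xi\in N^*_xX_0$ and projecting the time-$1$ flow to the bases gives $\varphi(\exp_{\V_0}\xi)=\exp_{\V_1}(F\xi)$, which is precisely the commutativity of the diagram. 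For the forms, since $R_\varphi$ is Lagrangian for $\omega_{\mathrm{can}}^0-\omega_{\mathrm{can}}^1$ and is flow-invariant, the closed two-form $\int_0^1(\phi^0_t\times\phi^1_t)^*(\omega_{\mathrm{can}}^0-\omega_{\mathrm{can}}^1)\,dt=\pr_0^*\Omega_{\V_0}-\pr_1^*\Omega_{\V_1}$ pulls back to zero on $R_\varphi$; restricting this identity along the graph of $F$ and recalling $\omega_{\V_i}=-\Omega_{\V_i}|_{N^*X_i}$ yields $F^*\omega_{\V_1}=\omega_{\V_0}$. The diagram then consists of Poisson maps: $\exp_{\V_i}$ are Poisson embeddings by Theorem B and $\varphi$ is Poisson, so $F=\exp_{\V_1}^{-1}\circ\varphi\circ\exp_{\V_0}$ is automatically Poisson onto its image.

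Before attempting the construction I would verify that the tangency condition is \emph{consistent} with the spray axioms: along $R_\varphi$ the base components of $(\V_0,\V_1)$ are forced to be $\pi_0^\sharp(\varphi^*\beta)$ and $\pi_1^\sharp(\beta)$, and these correspond correctly under $\varphi_*$ exactly because $\varphi$ is a Poisson map; moreover $R_\varphi$ is conic, so tangency is compatible with the homogeneity $m_t^*\V_i=t\V_i$. Thus the constraint cuts out a nonempty, affine-linear condition on the \emph{vertical} parts of the sprays along $R_\varphi$.

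I expect the main obstacle to be the actual construction of such a compatible pair, because $\varphi$ is in general neither an immersion nor a submersion, so $\varphi^*\colon\varphi^*T^*M_1\to T^*M_0$ has non-constant rank and one cannot simply push a spray forward or pull one back. My plan is to build the pair locally and patch: by Remark~\ref{rem : locally defined} only the behaviour near $X_0$ matters, so I would localize to a neighbourhood of $X_0$, where transversality of $\varphi$ to $X_1$ controls the normal directions and $\varphi^*$ is an isomorphism on conormals, prescribe the sprays along $R_\varphi$ so that tangency holds, and then extend to honest global sprays using a partition of unity --- the affine-convex nature of the spray conditions guarantees that the local solutions glue. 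The delicate point is to arrange the prescription along $R_\varphi$ coherently where the rank of $\varphi^*$ jumps; here one uses that the forced base components already match and that the remaining freedom is vertical, so the constraint is solvable fibrewise and smoothly in the base.
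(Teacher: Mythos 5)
Your reduction of both assertions to the existence of a spray pair $(\V_0,\V_1)$ tangent to the conic Lagrangian $R_\varphi$, followed by flow-invariance and averaging, is sound as far as it goes, and it is essentially the same mechanism the paper uses in its Steps 4--5 (there, the isotropy of the relevant graph inside $\omega_{\mathrm{can}}^0-\omega_{\mathrm{can}}^1$ is witnessed by the computation $\widetilde{F}^*(\lambda_{\mathrm{can}}|_{A_1})=\lambda_{\mathrm{can}}|_{A_0}$). The genuine gap is in your construction step: tangency of $(\V_0,\V_1)$ along \emph{all} of $R_\varphi$ is overdetermined exactly at the points you flag. At $(\varphi_x^*\beta,\beta)\in R_\varphi$, tangency together with the spray axiom forces $\V_0(\varphi_x^*\beta)$ to be the image of $\left(\pi_0^{\sharp}(\varphi_x^*\beta),\V_1(\beta)\right)$ under the derivative of the map $(x,\beta)\mapsto\varphi_x^*\beta$; hence, for a fixed $\V_1$, every $\beta'\in\beta+\ker\varphi_x^*$ dictates a value of $\V_0$ at the \emph{same} covector $\xi=\varphi_x^*\beta$. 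Writing vertical parts via a connection, consistency demands that $\varphi_x^*\left(\V_1(\beta+\kappa)^{\mathrm{vert}}-\V_1(\beta)^{\mathrm{vert}}\right)$ equal a correction term built from the derivative of $x\mapsto\varphi_x^*$ contracted with $\kappa\in\ker\varphi_x^*$, for every $x$ in the fiber $\varphi^{-1}(\varphi(x))$. This couples the values of $\V_1$ at different points of one fiber of $T^*M_1$; it is not a pointwise affine condition with free vertical part, so your partition-of-unity/convexity argument has nothing to glue --- convexity preserves solutions of pointwise constraints, but here local solutions need not exist at rank-jump points, and the kernels $\ker\varphi_x^*$ vary discontinuously, so smoothness fails as well. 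Localizing near $X_0$ does not remove this: transversality of $\varphi$ to $X_1$ controls only the conormal directions, and the rank of $\varphi$ can still jump along $X_0$.

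The repair is to demand much less, and it is precisely the paper's route: choose a subbundle $A_1\subset T^*U_1$ extending $N^*X_1$; by Lemma \ref{lem : Poisson pulls back Poisson transversals}, $\varphi^*$ is fibrewise injective on $A_1$ along $X_0$, hence on a neighborhood $U_0$, so that $A_0:=\varphi^*(U_0\times_{U_1}A_1)$ is a subbundle and $\widetilde{F}:A_0\to A_1$ is a bundle isomorphism whose graph is a sub-relation of your $R_\varphi$ containing the graph of $F$. On $A_1$ one takes an arbitrary spray, pulls it back to the \emph{unique} $\widetilde{F}$-related spray on $A_0$ (the prescription is single-valued exactly because $\varphi^*$ is injective on $A_1$ there), and extends both to sprays on $T^*U_i$ tangent to $A_i$ using complements and linear connections. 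Your flow-invariance and Lagrangian-restriction arguments then go through verbatim with the graph of $\widetilde{F}$ in place of $R_\varphi$: the flows preserve $A_i$ and intertwine via $\widetilde{F}$, which gives $\varphi\circ\exp_{\V_0}=\exp_{\V_1}\circ F$, and the isotropy of the graph of $\widetilde{F}$, i.e.\ $\widetilde{F}^*(\lambda_{\mathrm{can}}|_{A_1})=\lambda_{\mathrm{can}}|_{A_0}$, gives $F^*(\omega_{\V_1})=\omega_{\V_0}$ after averaging and restricting to $N^*X_0$.
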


In other words, the theorem allows us to bring simultaneously both Poisson structures in normal form so that the Poisson map becomes linear in the normal directions. This specializes to the normal form theorem of \cite{PT1} by taking $M_0=M_1$, $X_0=X_1$ and $\varphi=\operatorname{id}$. Remark \ref{rem : locally defined} applies also here: the result is only local around $X_0$ and $X_1$, as the exponential maps $\exp_{\V_i}$ are defined only around $X_i$. Moreover, if $X_i$ are not closed submanifolds, then we can only guarantee that the sprays $\V_i$ be defined around $X_i$.

\begin{proof}[Proof of Theorem \ref{thm : Normal Form for Poisson maps}] We split the proof into four steps:\\

\noindent\emph{Step 1: Extending the map $F$ around $X_0$ and $X_1$.} Let $U_1\subset M_1$ be an open set containing $X_1$ on which there is a vector subbundle $A_1\subset T^*U_1$ extending $N^*X_1$, i.e.\ $A_1|_{X_1}=N^*X_1$. Consider the vector bundle map between vector bundles over $\varphi^{-1}(U_1)\subset M_0$:
\[\varphi^*:\varphi^{-1}(U_1)\times_{U_1}A_1\to T^*\varphi^{-1}(U_1).\]
By Lemma \ref{lem : Poisson pulls back Poisson transversals}, this map is fibrewise injective along $X_0$. Therefore, it is injective on an open neighborhood $U_0$ of $X_0$ in $\varphi^{-1}(U_1)$. Let \[A_0:=\varphi^*(U_0\times_{U_1}A_1)\subset T^*U_0.\]
Then $A_0|_{X_0}=N^*X_0$. Clearly $\varphi^*:U_0\times_{U_1}A_1\diffto A_0$ is a vector bundle isomorphism. The inverse of this map, composed with the second projection, gives the vector bundle map
\[
\xymatrix{
 A_0 \ar[r]^{\widetilde{F}}\ar[d] & A_1\ar[d] \\
 U_0\ar[r]^{\varphi} & U_1}
 \]
which is a fibrewise isomorphism and extends the map $F$ in (\ref{EQ : map F}).\\

\noindent\emph{Step 2: Constructing $\widetilde{F}$-related sprays on $A_0$ and $A_1$.} Consider a spray on $A_1$, i.e.\ a quadratic vector field $\V_1\in\X(A_1)$ so that $\pr_*(\V_1(\xi))=\pi_1^{\sharp}(\xi)$, for $\xi\in A_1$. Such a spray can be easily constructed by choosing a linear connection on $A_1$ and letting $\V_1(\xi)$ be the horizontal lift of $\pi_1^{\sharp}(\xi)$.

We identify $TA_0\cong TU_0\times_{TU_1}TA_1$ by means of the isomorphism $\varphi^*:U_0\times_{U_1}A_1\diffto A_0$; note that, under this identification, the differential of $\widetilde{F}$ becomes the second projection. This shows that there is a unique spray $\V_0$ on $A_0$ that is $\widetilde{F}$-related to $\V_1$: it is defined by the pair $(\pi_0^{\sharp}(\xi),\V_1((\varphi^*)^{-1}(\xi)))\in TU_0\times_{TU_1}TA_1$, for $\xi\in A_0$. That this is indeed an element of $TU_0\times_{TU_1}TA_1$ follows from the fact that $\varphi$ is Poisson, and that $\V_1$ is a spray, namely: \[\varphi_*(\pi_0^{\sharp}(\xi))=\pi_1^{\sharp}((\varphi^*)^{-1}(\xi))=\pr_*(\V_1((\varphi^*)^{-1}(\xi))).\]

\noindent\emph{Step 3: Extending the sprays.} Each spray $\V_i$ can be extended to a spray $\widetilde{\V}_i$ on $T^*U_i$ which is tangent to $A_i$. Choose subbundles $C_i \subset T^*U_i$ complementary to $A_i$; this allows us to describe the tangent bundle to $T^*U_i=A_i\times_{U_i} C_i$ as $T(T^*U_i)=TA_i\times_{TU_i} TC_i$. Choose also linear connections on $A_i$ and $C_i$, the horizonal lifts of which we denote by $h^A$ and $h^C$, respectively. For $\xi=(a,c)\in T^*U_i$, define $\widetilde{\V}_i(\xi):=(\V_i(a)+h^A(\pi_i^{\sharp}(c)), h^C(\pi_i^{\sharp}(\xi)))$. It is easy to see that $\widetilde{\V}_i$ defines a quadratic vector field on $T^*U_i$, and that it is a spray. Since the connections are linear, the canonical inclusion $TU_i\subset TC_i|_{U_i}$ is realized by $h^C$. Also, $TA_i\subset T(T^*U_i)|_{A_i}$ corresponds to $TA_i\times_{TU_i} TU_i \subset (TA_i\times_{TU_i} TC_i)|_{A_i}$. Thus, for $a\in A_i$, we have that $\widetilde{\V}_i(a)=(\V_i(a), \pi_i^{\sharp}(a))\in TA_i$; hence $\widetilde{\V}
_i$ is tangent to $A_i$
and extends $\V_i$.

If $X_i$ is a closed submanifold of $M_i$, then $\widetilde{\V}_i$ can be extended to the entire $T^*M_i$.

To simplify notation we will denote $\widetilde{\V}_i$ also by $\V_i$.\\

\noindent\emph{Step 4: Commutativity of the diagrams.} Let $\Phi_{\V_i}^t$ denote the time-$t$ local flow of $\V_i$. Since $\V_i$ is tangent to $A_i$, and $\widetilde{F}_{\ast}\V_0=\V_1$, on $A_0$ we have that $\widetilde{F}\circ \Phi_{\V_0}^t=\Phi_{\V_1}^t\circ \widetilde{F}$. Since $\widetilde{F}$ extends $F$, we obtain the following commutative diagram:
\[
\xymatrix{
 N^*X_0 \ar[r]^{\Phi^1_{\V_0}}\ar[d]_{F} & A_0\ar[d]^{\widetilde{F}}\ar[r]^{\pr} & U_0\ar[d]^{\varphi}\\
 N^*X_1 \ar[r]_{\Phi^1_{\V_1}} & A_1\ar[r]_{\pr} & U_1\\
}
 \]
which implies the equality $\varphi\circ \exp_{\V_0}=\exp_{\V_1}\circ F$ from the statement.\\

\noindent\emph{Step 5: Compatibility of the two-forms.}
As in Theorems A and B, we denote by $\Omega_{\V_i}:=\int_0^1(\Phi^t_{\V_i})^{\ast}\omega_{\mathrm{can}}dt$ and $\omega_{\V_i}:=-\Omega_{\V_i}|_{N^*X_i}$. By Theorem B, the exponentials $\exp_{\V_i}:(N^*X_i,\pi_{X_i}^{\omega_{\V_i}})\inc (M_i,\pi_i)$ are Poisson diffeomorphisms around $X_i$.
Hence, also $F:(N^*X_0,\pi_{X_0}^{\omega_{\V_0}})\to (N^*X_1,\pi_{X_1}^{\omega_{\V_1}})$ is a Poisson map in a neighborhood of $X_0$. This does not directly imply that $F^*(\omega_{\V_1})=\omega_{\V_0}$, and this is what we prove next.

Recall that the tautological one-form $\lambda_{\mathrm{can}}\in \Omega^1(T^*M_i)$ is defined by $\lambda_{\mathrm{can},\xi}(v):=\langle\xi,\pr_*(v)\rangle$, for $\xi\in T^*M_i$ and $v\in T_{\xi}(T^*M_i)$. We show now that $\widetilde{F}$ satisfies: $\widetilde{F}^*(\lambda_{\mathrm{can}}|_{A_1})=\lambda_{\mathrm{can}}|_{A_0}$. For $\xi\in A_0$ and $v\in T_{\xi}A_0$, we have:
\begin{align*}
(\widetilde{F}^*\lambda_{\mathrm{can}})_{\xi}(v)&= \langle \widetilde{F}(\xi),\pr_*(\widetilde{F}_*(v))\rangle=\langle (\varphi^*)^{-1}(\xi),(\pr\circ\widetilde{F})_*(v)\rangle=\\
&=\langle (\varphi^*)^{-1}(\xi),(\varphi\circ \pr)_*(v)\rangle=\langle (\varphi^*)^{-1}(\xi),\varphi_*(\pr_*(v))\rangle=\\
&=\langle \xi,\pr_*(v)\rangle=\lambda_{\mathrm{can},\xi}(v).
\end{align*}
This implies that $\widetilde{F}^*(\omega_{\mathrm{can}}|_{A_1})=\omega_{\mathrm{can}}|_{A_0}$. Using that $\widetilde{F}$ intertwines the flows of the sprays, and that these flows preserve the submanifolds $A_0$, $A_1$, we obtain:
\begin{align*}
(\Phi_{\V_0}^{t*}\omega_{\mathrm{can}})|_{A_0}&=\Phi_{\V_0}^{t*}(\omega_{\mathrm{can}}|_{A_0})=\Phi_{\V_0}^{t*}\circ \widetilde{F}^*(\omega_{\mathrm{can}}|_{A_1})=\\
&=\widetilde{F}^*\circ \Phi_{\V_1}^{t*}(\omega_{\mathrm{can}}|_{A_1})=\widetilde{F}^*(\Phi_{\V_1}^{t*}(\omega_{\mathrm{can}})|_{A_1}).
\end{align*}
Averaging this equality for $t\in [0,1]$, in neighborhood of $X_0$ yields $\widetilde{F}^*(\Omega_{\V_1}|_{A_1})=\Omega_{\V_0}|_{A_0}$. Restricting to $N^*X_0$, we obtain the conclusion: $F^*(\omega_{\V_1})=\omega_{\V_0}$.
\end{proof}

\section{Integrability}

Symplectic groupoids are the natural objects integrating Poisson manifolds. In this section we discuss the relation between integrability of a Poisson manifold and integrability of one of its transversals. For integrable Poisson manifolds, we give a normal form theorem for the symplectic groupoid around its restriction to a Poisson transversal.

\subsection*{Symplectic groupoids}

We recollect here a few facts about symplectic groupoids and integrability of Poisson manifolds. For references see \cite{CDW,CrFer04}.

We denote the source/target maps of a Lie groupoid $\G\rr M$ by $\mathbf{s},\mathbf{t}:\G\to M$, and the multiplication by $\mathbf{m}:\G\times_{\mathbf{s},\mathbf{t}}\G\to \G$.

A differential form $\eta \in \Omega^q(\G)$ is called {\bf multiplicative} if
\[
 \mathbf{m}^{\ast}\eta=\pr_1^{\ast}\eta+\pr_2^{\ast}\eta\in \Omega^q(\G\times_{\mathbf{s},\mathbf{t}}\G),
\]
where $\pr_1, \pr_2:\G\times_{\mathbf{s},\mathbf{t}}\G\to \G$ are the projections.

A {\bf symplectic groupoid} is a Lie groupoid $\G \rr M$ endowed with a multiplicative symplectic structure $\omega \in \Omega^2(\G)$. The base $M$ of a symplectic groupoid $(\G,\omega)$ carries a Poisson structure $\pi$ so that:
\[(M,\pi)\stackrel{\mathbf{s}}{\lmap}(\G,\omega)\stackrel{\mathbf{t}}{\rmap}(M,-\pi)\]is a full dual pair.

A Poisson manifold $(M,\pi)$ is called {\bf integrable} if such a symplectic groupoid $(\G,\omega)$ exists giving rise to $\pi$, in which case the groupoid is said to {\bf integrate} $(M,\pi)$.

\begin{theorem}\label{thm : integrability_up}
A Poisson transversal $(X,\pi_X)$ of a Poisson manifold $(M,\pi)$ is integrable if, and only if, the restriction $(U,\pi|_U)$ of $\pi$ to an open neighborhood $U$ of $X$ is an integrable Poisson manifold.
\end{theorem}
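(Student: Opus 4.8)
The plan is to treat the two implications by quite different means. For the easy direction, that integrability of $(X,\pi_X)$ forces integrability of a neighborhood, I would simply combine the two normal form theorems already at hand. By Theorem B a neighborhood of $X$ in $M$ is Poisson-diffeomorphic to a neighborhood $E$ of the zero section in $(N^{\ast}X,\pi_X^{\omega_{\V}})$, i.e.\ to the local model $\pi_X^{\sigma}$ with $\sigma=\omega_{\V}\in\Upsilon(w_X)$. If $(X,\pi_X)$ is integrated by some $(\mathcal{G}_X,\omega_X)$, then part (a) of Theorem 3 produces, out of this integration, the explicit symplectic groupoid $(\mathcal{G}_X^E,\omega_E)\rr(E,\pi_X^{\sigma})$ integrating the model; transporting it back through the normal form isomorphism exhibits an integrable open neighborhood of $X$. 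Thus this direction is essentially a corollary of the normal form results (and involves no circularity, since the proof of Theorem 3(a) constructs $\mathcal{G}_X^E$ directly).

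For the converse — the substantive direction — suppose $(U,\pi|_U)$ is integrated by a symplectic groupoid $(\mathcal{G},\omega)\rr(U,\pi|_U)$, where we may take $X\subset U$. The candidate integration of $(X,\pi_X)$ is the naive restriction $\mathcal{G}_X:=\mathbf{s}^{-1}(X)\cap\mathbf{t}^{-1}(X)$. The key observation is that $\mathcal{G}_X=(\mathbf{s},\mathbf{t})^{-1}(X\times X)$, that $(\mathbf{s},\mathbf{t}):(\mathcal{G},\omega)\to(U\times U,\pi|_U\oplus(-\pi|_U))$ is a Poisson map, and that $X\times X$ is a Poisson transversal of the target, being the product of the Poisson transversal $X$ of $(U,\pi|_U)$ with the Poisson transversal $X$ of $(U,-\pi|_U)$ (the transversal condition being insensitive to the sign of $\pi$). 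Lemma~\ref{lem : Poisson pulls back Poisson transversals} then delivers, in one stroke, that $\mathcal{G}_X$ is a Poisson transversal in $(\mathcal{G},\omega)$ — hence, $\omega$ being nondegenerate, a symplectic submanifold on which $\omega_X:=\omega|_{\mathcal{G}_X}$ is symplectic — and that the restriction $(\mathbf{s},\mathbf{t})|_{\mathcal{G}_X}$ is a Poisson map onto $(X\times X,\pi_X\oplus(-\pi_X))$. Since the transverse Poisson structure of a symplectic submanifold is the one dual to the restricted symplectic form, projecting onto the first factor shows that $\mathbf{s}|_{\mathcal{G}_X}$ is a Poisson map onto $(X,\pi_X)$ when $\mathcal{G}_X$ carries the Poisson structure induced by $\omega_X$.

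It then remains to confirm that $\mathcal{G}_X\rr X$ is genuinely a symplectic groupoid integrating $(X,\pi_X)$. That it is a subgroupoid — closed under multiplication and inversion and containing the units over $X$ — is immediate from its description, and multiplicativity of $\omega_X$ is inherited by restriction from that of $\omega$. The only point requiring groupoid-specific input is that $\mathbf{s}|_{\mathcal{G}_X}$ is a submersion onto $X$: given $g\in\mathcal{G}_X$ with $\mathbf{t}(g)=y$ and a vector $u\in T_{\mathbf{s}(g)}X$, one lifts $u$ along the submersion $\mathbf{s}$ and corrects the lift within $\ker d\mathbf{s}_g$, which is possible because $d\mathbf{t}_g(\ker d\mathbf{s}_g)=\pi^{\sharp}(T^{\ast}_yM)$ contains the embedded normal bundle $N_yX$, while $T_yX\oplus N_yX=T_yM$ by the Poisson-transversal splitting. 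Being a surjective submersion and a Poisson map onto $(X,\pi_X)$, $\mathbf{s}|_{\mathcal{G}_X}$ forces the base Poisson structure of $(\mathcal{G}_X,\omega_X)$ to be exactly $\pi_X$, which completes the converse.

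The conceptual heart of the argument — and its main obstacle — is controlling the naive restriction $\mathcal{G}_X$, which a priori is only an intersection of two submanifolds and could fail to be clean, symplectic, or to carry the correct transverse data. Recognizing it as the preimage $(\mathbf{s},\mathbf{t})^{-1}(X\times X)$ of a Poisson transversal under the Poisson map $(\mathbf{s},\mathbf{t})$ is precisely what allows Lemma~\ref{lem : Poisson pulls back Poisson transversals} to settle smoothness, symplecticity, and the identification of the base Poisson structure simultaneously; the lone submersivity check is the only place where the groupoid structure, through the anchor relation $d\mathbf{t}|_{\ker d\mathbf{s}}=\pi^{\sharp}$, genuinely intervenes.
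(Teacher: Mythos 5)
Your proposal is correct, and it is half the paper's own argument and half a genuinely different one. For the direction where a neighborhood $(U,\pi|_U)$ is assumed integrable, your argument coincides with the paper's Step 1: apply Lemma \ref{lem : Poisson pulls back Poisson transversals} to the Poisson map $(\mathbf{t},\mathbf{s}):(\G,\omega)\to (U,-\pi)\times(U,\pi)$ and the Poisson transversal $X\times X$ (which, as you note, is insensitive to the sign of $\pi$ since $(-\pi)^{\sharp}(N^*X)=\pi^{\sharp}(N^*X)$), concluding that $\G_X=(\mathbf{t},\mathbf{s})^{-1}(X\times X)$ is a Poisson transversal in $(\G,\omega)$, hence a symplectic submanifold, with $(\mathbf{t},\mathbf{s})|_{\G_X}$ Poisson onto the product. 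The paper delegates the remaining groupoid-theoretic verifications to a citation of \cite{CrFer04}; your explicit check that $\mathbf{s}|_{\G_X}$ is a submersion, via $\mathbf{t}_{\ast}(\ker \mathbf{s}_{\ast})=\pi^{\sharp}(T^{\ast}M)\supset NX$ together with the splitting $TX\oplus NX=TM|_X$, correctly fills in what the paper leaves implicit. For the converse direction the two proofs genuinely diverge: the paper argues infinitesimally, using \cite{MackenzieXu00} to reduce integrability to that of the cotangent Lie algebroid, Lemma \ref{pro : cotangent=pullback} to identify $T^*E$ with the pullback algebroid $TE\times_{TX}T^*X$, and \cite{Mackenzie87} to integrate the latter by the pullback groupoid; you instead invoke the explicit symplectic groupoid $(\G_X^E,\omega_E)$ of Proposition \ref{pro : model presymplectic groupoid}, which appears after Theorem \ref{thm : integrability_up} in the paper but whose proof (via the Dirac-map Lemma \ref{lem : Dirac}) is logically independent of it, so your non-circularity claim is accurate. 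Your route buys more: an explicit symplectic integration of the neighborhood rather than bare existence — indeed the paper itself remarks that its own proof leaves the relation between the two integrations poorly understood and devotes the subsequent section to precisely the construction you use — at the cost of front-loading the multiplicativity and nondegeneracy analysis of $\omega_E$, which the paper's lighter algebroid argument defers.
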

\begin{proof}
 \underline{Step 1 : If.} Let $(\Sigma,\Omega)\rr (U,\pi)$ be a symplectic groupoid, and $p:U \supset E\to X$ be a tubular neighborhood on which the normal form holds: $\pi|_E=\pi_X^{\sigma}$, for some closed two-form $\sigma$ on $E$, satisfying $\sigma(v)=0$ for all $v\in TX$. Denote $\G_X:=\Sigma|_X$, $\omega_X:=\Omega|_{\G_X}$. Then $\pi_X$ is integrable by the symplectic groupoid $(\G_X,\omega_X)\rr (X,\pi_X)$. This is proved in \cite{CrFer04}, in the more general setting of ``Lie-Dirac submanifolds'' (Theorem 9); for completeness, we include a simple proof:

Applying Lemma \ref{lem : Poisson pulls back Poisson transversals} to the Poisson map
\[(\mathbf{t},\mathbf{s}): (\Sigma,\Omega) \to (U,-\pi)\times (U,\pi),\]
and the Poisson transversal $X \times X \subset U \times U$, we deduce that $(\mathbf{t},\mathbf{s})$ is transverse to $X \times X$, that $(\mathbf{t},\mathbf{s})^{-1}(X \times X)=:\G_X \subset \Sigma$ is a Poisson transversal in $\Sigma$ (thus $\omega_X$ is symplectic), and that the induced map
\[(\mathbf{t},\mathbf{s}): (\G_X,\omega_X) \to (X,-\pi_X)\times (X,\pi_X)\]
is again Poisson. Hence $(\G_X,\omega_X)$ is a symplectic groupoid integrating $(X,\pi_X)$.

\underline{Step 2 : Only if.} Recall \cite{MackenzieXu00} that integrability of a Poisson manifold by a symplectic groupoid is equivalent to integrability of its cotangent Lie algebroid. In particular, $\G_X$ integrates $T^*X$. By Theorem B and Lemma \ref{pro : cotangent=pullback} below, in a tubular neighborhood $p:E\to X$ of the Poisson transversal $(X,\pi_X)\subset (M,\pi)$, the cotangent Lie algebroid $T^*E$ of $\pi|_E$ is isomorphic to the pullback Lie algebroid $TE\times_{TX}T^*X$ of the cotangent Lie algebroid $T^*X$ of $\pi_X$ by $p$. By Proposition 1.3 \cite{Mackenzie87}, the pullback Lie algebroid $TE\times_{TX}T^*X$ is integrable by the pullback groupoid (see below), and so $(E,\pi_X^{\sigma})$ is integrable.
\end{proof}

An inconvenient feature of both Theorem \ref{thm : integrability_up} and its proof is that we are left with a poor understanding of how the symplectic groupoids integrating $(X,\pi_X)$ and a neighborhood of it are related. This is the issue we address in the next section.

\section{Normal form for symplectic groupoids}

Our next goal is to state and prove Theorem \ref{thm : integrability_down} below, which refines Theorem \ref{thm : integrability_up} in that it gives a precise description of the symplectic groupoid integrating a neighborhood of a Poisson transversal in terms of the symplectic groupoid integrating the Poisson transversal itself.

We begin with a description of the Lie algebroid structure corresponding to Poisson structures constructed using the 'Poisson transversal recipe'. Concretely, consider the following set-up, which appears around Poisson transversals:
\begin{itemize}
\item $(X,\pi_X)$ is a Poisson manifold;
\item $p:E\to X$ is a surjective submersion;
\item $\sigma$ is a closed two-form on $E$ so that the Dirac structure $p^*(L_{\pi_X})^{\sigma}$ corresponds to a globally defined Poisson structure $\pi_X^{\sigma}$ on $E$.
\end{itemize}

Consider the pullback of the Lie algebroid $T^*X$ via the submersion $p:E\to X$ (see e.g.\ \cite{MackenzieHiggins90} for the general construction of Lie algebroid pullbacks)
\[TE\times_{TX}T^*X=\{(U,\eta)\in TE\times T^*X : p_*(U)=\pi_X^{\sharp}(\eta)\}.\]

The Lie algebroid $TE\times_{TX}T^*X$ fits into a short exact sequence of Lie algebroids:
\begin{equation}\label{diag : short exact}
0\rmap \mathbb{V}\rmap TE\times_{TX}T^*X\rmap T^*X\rmap 0,
\end{equation}
where $\mathbb{V}\subset TE$ denotes the Lie algebroid $\mathbb{V}=\ker(p_{\ast})$. We have:

\begin{lemma}\label{pro : cotangent=pullback}
The cotangent Lie algebroid $T^*E$ of $\pi_X^{\sigma}$ is isomorphic to the pullback Lie algebroid $TE\times_{TX}T^*X$ via the map
\[\sigma^{\sharp}+p^*:TE\times_{TX}T^*X\diffto T^*E, \ \ (U,\eta)\mapsto \sigma^{\sharp}(U)+p^*(\eta).\]
Under this isomorphism, the short exact sequence (\ref{diag : short exact}) corresponds to
\[0\rmap \mathbb{V}\stackrel{\sigma^{\sharp}}{\rmap} T^*E \rmap T^*X\rmap 0,\]
where the second map assigns to $\xi \in T^*E$ the unique $\eta \in T^*X$ for which $p^*(\eta)=\xi-\sigma^{\sharp}((\pi_X^{\sigma})^{\sharp}(\xi))$.
\end{lemma}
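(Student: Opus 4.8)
The plan is to realize $\sigma^{\sharp}+p^*$ as a composite of three Lie algebroid isomorphisms, only one of which requires genuine work. Recall that, by definition of $\pi_X^{\sigma}$, the graph of $\pi_X^{\sigma}$ is the Dirac structure $L_{\pi_X^{\sigma}}=p^*(L_{\pi_X})^{\sigma}$, and that the projection $\pr_{T^*E}\colon L_{\pi_X^{\sigma}}\to T^*E$, $((\pi_X^{\sigma})^{\sharp}\xi,\xi)\mapsto\xi$, is the standard identification of the cotangent Lie algebroid of $\pi_X^{\sigma}$ with its graph, carrying the restricted Courant bracket to the cotangent bracket. Likewise, since $\sigma$ is closed, the gauge transformation $(U,\xi)\mapsto(U,\xi+\sigma^{\sharp}(U))$ with $\sigma^{\sharp}(U)=\iota_U\sigma$ is an automorphism of the standard Courant algebroid $TE\oplus T^*E$, so it restricts to a Lie algebroid isomorphism $p^*(L_{\pi_X})\diffto p^*(L_{\pi_X})^{\sigma}=L_{\pi_X^{\sigma}}$. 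It therefore remains to treat the backward image itself.

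Concretely, $p^*(L_{\pi_X})=\{(U,p^*\eta): p_*(U)=\pi_X^{\sharp}(\eta)\}$, and the assignment $\Psi\colon(U,\eta)\mapsto(U,p^*\eta)$ is a vector bundle isomorphism $TE\times_{TX}T^*X\diffto p^*(L_{\pi_X})$; injectivity and surjectivity both follow from $p$ being a submersion, so that $p^*\colon T^*X\to T^*E$ is fibrewise injective. The anchors match on the nose, both being the projection $(U,\eta)\mapsto U$ to $TE$. The heart of the argument is that $\Psi$ intertwines the pullback Lie algebroid bracket with the Courant bracket, which I would verify on a generating set of sections: the vertical sections $(W,0)$ with $W\in\Gamma(\mathbb{V})$, and the sections $(V_{\alpha},\alpha\circ p)$ where $\alpha\in\Omega^1(X)$ and $V_{\alpha}\in\X(E)$ is any lift of $\pi_X^{\sharp}(\alpha)$. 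Using that $p^*$ commutes with $d$ and that $\mathcal{L}_{V}p^*\gamma=p^*\mathcal{L}_{\bar V}\gamma$ and $\iota_{V}p^*\gamma=p^*\iota_{\bar V}\gamma$ whenever $V$ is $p$-related to $\bar V$, the $T^*E$-component of the Courant bracket of two such sections collapses to
\[
\mathcal{L}_{V_\alpha}p^*\beta-\iota_{V_\beta}\,p^*d\alpha=p^*\bigl(\mathcal{L}_{\pi_X^{\sharp}\alpha}\beta-\iota_{\pi_X^{\sharp}\beta}\,d\alpha\bigr)=p^*\{\alpha,\beta\}_{\pi_X},
\]
which is exactly the Koszul bracket on $X$ pulled back; the $TE$-component is the plain Lie bracket $[V_\alpha,V_\beta]$, and the vertical cases are immediate from involutivity of $\mathbb{V}$ and the fact that $\iota_W$ kills basic forms. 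Comparing with the pullback bracket, whose value on these projectable generators is $([V_\alpha,V_\beta],\{\alpha,\beta\}_{\pi_X}\circ p)$ by the characterizing properties that $\mathbb{V}$ is a subalgebroid and that the projection $TE\times_{TX}T^*X\to T^*X$ is a Lie algebroid morphism over $p$, this identifies $p^*(L_{\pi_X})$, with its Courant bracket, with the pullback Lie algebroid.

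Composing $\Psi$, the gauge isomorphism, and $\pr_{T^*E}$ sends $(U,\eta)\mapsto(U,p^*\eta)\mapsto(U,p^*\eta+\sigma^{\sharp}(U))\mapsto p^*\eta+\sigma^{\sharp}(U)$, which is precisely the map $\sigma^{\sharp}+p^*$ of the statement; being a composite of Lie algebroid isomorphisms, it is one. Finally, for the short exact sequence I would transport (\ref{diag : short exact}) through this composite: the subbundle $\mathbb{V}=\{(U,0):p_*U=0\}$ maps to $\sigma^{\sharp}(\mathbb{V})\subset T^*E$, giving the first map $\sigma^{\sharp}$; and, reading off the anchor identification, a covector $\xi=\sigma^{\sharp}(U)+p^*\eta$ has $U=(\pi_X^{\sigma})^{\sharp}(\xi)$, so the quotient projection $(U,\eta)\mapsto\eta$ becomes the map sending $\xi$ to the unique $\eta$ with $p^*\eta=\xi-\sigma^{\sharp}((\pi_X^{\sigma})^{\sharp}\xi)$, exactly as claimed.

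The main obstacle is the bracket computation of the middle paragraph: while each individual identity is routine, one must set up the generating sections of the pullback Lie algebroid correctly and be careful that the Courant-bracket formula reduces to the pulled-back Koszul bracket with no leftover terms — in particular that every contribution involving $\iota_W$ of a basic form vanishes because $W$ is vertical, and that the lifts $V_\alpha$ may be chosen arbitrarily without affecting the outcome.
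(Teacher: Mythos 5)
Your proposal is correct and follows essentially the same route as the paper: both factor $\sigma^{\sharp}+p^*$ as the composite of the identification $TE\times_{TX}T^*X\diffto p^*(L_{\pi_X})$, the gauge transformation $e_{\sigma}$, and the projection $\pr_{T^*E}:p^*(L_{\pi_X})^{\sigma}\diffto T^*E$. The only difference is one of detail: the paper simply cites these three maps as Lie algebroid isomorphisms, whereas you additionally carry out the bracket verification on generating sections identifying $p^*(L_{\pi_X})$ with the pullback Lie algebroid, and make explicit the anchor computation $U=(\pi_X^{\sigma})^{\sharp}(\xi)$ behind the short exact sequence -- both of which are sound.
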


\begin{proof}
We have a sequence of Lie algebroid isomorphisms: first, the cotangent Lie algebroid $T^*E$ of $\pi_X^{\sigma}$ is defined so that the map \[\pr_{T^*E}:p^*(L_{\pi_X})^{\sigma}\diffto T^*E\]
be a Lie algebroid isomorphism; next, the gauge transformation by $\sigma$ is also a Lie algebroid isomorphism
\[e_{\sigma}: p^*(L_{\pi_X})\diffto p^*(L_{\pi_X})^{\sigma}, \ \ e_{\sigma}(U+\xi)=U+\xi+\sigma^{\sharp}(U);\]
and finally, the map $TE\times_{TX}T^*X\diffto p^*(L_{\pi_X})$, $U+\eta \mapsto U+p^*(\eta)$ is an isomorphism as well. The composition of these maps returns the morphisms from the statement.
\end{proof}

We present next a general construction for symplectic groupoids, which provides the local model of a symplectic groupoid around its restriction to a Poisson transversal.

\subsection*{A pullback construction for symplectic groupoids}

The construction of the pullback groupoid is rather standard (according to \cite{Mackenzie87}, it dates back to Ehresmann). We reexamine the construction in the setting of symplectic groupoids, in order to obtain a more explicit proof of Theorem \ref{thm : integrability_up}.

Let $\mathcal{P}(E):=E\times E\rr E$ and $\mathcal{P}(X):=X\times X\rr X$ stand respectively for the pair groupoids of $E$ and $X$. Define the groupoid $\G_{X}^E\rr E$ to be the pullback of the groupoid maps:
\begin{equation}\label{diag : pullback diagram}
 \xymatrix{
  \G_{X}^E \ar[d]_{\mathbf{p}} \ar[rr]^{(\mathbf{t},\mathbf{s})} && \mathcal{P}(E) \ar[d]^{p \times p \ .}\\
  \G_X \ar[rr]_{(\mathbf{t},\mathbf{s})} && \mathcal{P}(X)
 }
\end{equation}
That is, $\G_X^E$ is the manifold
\begin{gather*}
\G_{X}^E:=\left\lb (e',g,e): p(e')=\mathbf{t}(g), p(e)=\mathbf{s}(g) \right\rb \subset E\times \G_X \times E,
 \end{gather*}
endowed with the structure maps
 \begin{gather*}
 \mathbf{s}(e',g,e)=e, \quad \mathbf{t}(e',g,e)=e', \quad (e'',h,e')(e',g,e)=(e'',hg,e)\\
 (e',g,e)^{-1}=(e,g^{-1},e'), \quad \mathbf{1}_e=(e,1_{p(e)},e);
\end{gather*}
As pullbacks by groupoid maps of closed, multiplicative forms $\omega_X \in \Omega^2(\G_X)$, $\sigma \in \Omega^2(\mathcal{P}(E))$, both $\mathbf{p}^*(\omega_X)$ and $\mathbf{s}^*(\sigma)-\mathbf{t}^*(\sigma)$ are closed, multiplicative two-forms on $\G_X^E$, and hence so is their sum:
\[
 \omega_{E} \in \Omega^2(\G_X^E), \qquad \omega_E:=\mathbf{p}^{\ast}(\omega_{X})+\mathbf{s}^{\ast}(\sigma)-\mathbf{t}^{\ast}(\sigma).
\]

\begin{proposition}\label{pro : model presymplectic groupoid}
$(\G_X^E,\omega_{E})\rr (E,\pi_X^{\sigma})$ is a symplectic groupoid.
\end{proposition}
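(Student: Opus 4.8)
Since $\G_X^E$ is the Ehresmann pullback of $\G_X$ along the surjective submersion $p\times p$, it is automatically a Lie groupoid with the stated structure maps, and a dimension count gives $\dim\G_X^E=\dim\G_X+2\dim\ker(p_*)=2\dim E$, the correct dimension for a symplectic groupoid over $E$. As $\omega_E$ has already been shown to be closed and multiplicative, the whole content of the proposition is that $\omega_E$ is \emph{non-degenerate} and that the Poisson structure it induces on the base is exactly $\pi_X^\sigma$. The plan is to extract both facts from a single computation at the unit section. Recall that any multiplicative form restricts to zero on the units, so in the splitting $T_{\mathbf 1_e}\G_X^E=T_eE\oplus A_e$ (with $A=\ker\mathbf{s}_*$ the Lie algebroid) the $T_eE$-block of $\omega_E$ vanishes; hence $\omega_E$ is non-degenerate at $\mathbf 1_e$ if and only if its mixed block, the IM bundle map $\mu_{\omega_E}:A\to T^*E$, $\mu_{\omega_E}(a)=\omega_E(a,\cdot)|_{TE}$, is a fibrewise isomorphism.

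First I would identify $A(\G_X^E)$ with the pullback algebroid $TE\times_{TX}T^*X$, using that $\G_X$ integrates $\pi_X$ so that $A(\G_X)=T^*X$. Concretely, $(U,\eta)\in TE\times_{TX}T^*X$ is represented in $T_{\mathbf 1_e}\G_X^E\subset T_eE\times T\G_X\times T_eE$ by $(U,a_\eta,0)$, where $a_\eta\in\ker\mathbf{s}_*$ is the algebroid element of $\G_X$ attached to $\eta$, while a unit-direction vector attached to $v\in T_eE$ is $(v,\mathbf 1_*p_*v,v)$. Pairing these against $\omega_E=\mathbf{p}^*\omega_X+\mathbf{s}^*\sigma-\mathbf{t}^*\sigma$, the source-components contribute nothing, and using that $\G_X$ is a symplectic groupoid for $\pi_X$ (so $\omega_X(a_\eta,\mathbf 1_*w)=\langle\eta,w\rangle$) I expect to obtain
\[
\mu_{\omega_E}(U,\eta)=p^*\eta-\sigma^\sharp(U)\in T^*_eE.
\]
Up to the sign conventions for $\sigma^\sharp$ and for the IM form this is precisely the isomorphism $\sigma^\sharp+p^*$ of Lemma \ref{pro : cotangent=pullback}; in particular it is a fibrewise isomorphism.

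This simultaneously settles both points. Non-degeneracy of $\omega_E$ at the units follows from the block structure above, and it propagates to all of $\G_X^E$ by the standard infinitesimal characterization of symplectic groupoids, by which non-degeneracy of a closed multiplicative form is determined along the unit section (cf.\ \cite{CDW,CrFer04}); this propagation is the step I expect to be the main obstacle, and the one I would phrase most carefully. Moreover, since $\mu_{\omega_E}$ is exactly the identification of Lemma \ref{pro : cotangent=pullback}, under which $TE\times_{TX}T^*X$ is the cotangent Lie algebroid of $\pi_X^\sigma$, the Poisson structure that $(\G_X^E,\omega_E)$ induces on $E$ is $\pi_X^\sigma$, as claimed. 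Should one wish to avoid invoking the propagation result, non-degeneracy can instead be verified by hand: $\omega_E$ restricts to $\ker\mathbf{p}_*$ as $\sigma|_{\mathbb V_e}\oplus(-\sigma|_{\mathbb V_{e'}})$, which is non-degenerate because $\sigma$ is fibrewise symplectic on $\mathbb V=\ker p_*$, and on an $\omega_E$-orthogonal complement the form is governed by the symplectic $\omega_X$; the crux in either approach is precisely the fibrewise non-degeneracy of $\sigma$ that makes $\pi_X^\sigma$ Poisson to begin with.
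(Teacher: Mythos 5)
Your main line of argument is correct, but it takes a genuinely different route from the paper. The paper argues globally and Dirac-theoretically: it applies Lemma \ref{lem : Dirac}(a) to the defining pullback square to show that $(\mathbf{t},\mathbf{s}):(\G_X^E,\mathbf{p}^*(\omega_X))\to (E,p^*(L_{-\pi_X}))\times(E,p^*(L_{\pi_X}))$ is forward Dirac, gauge-transforms by $\sigma$ via part (b) to land on $(E,-\pi_X^{\sigma})\times(E,\pi_X^{\sigma})$ --- which identifies the base Poisson structure at once --- and then obtains nondegeneracy at \emph{every} point directly: part (c) gives $\ker(\omega_E)\subset\ker(\mathbf{s}_*)\cap\ker(\mathbf{t}_*)=\{(0,B,0)\}$ in the decomposition $T\G_X^E=TE\times_{TX}T\G_X\times_{TX}TE$, and such vectors are killed because $\iota_{(0,B,0)}\omega_E=\mathbf{p}^*(\iota_B\omega_X)$ and $\omega_X$ is nondegenerate. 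You instead work along the unit section, identify the IM form of $\omega_E$ with the isomorphism of Lemma \ref{pro : cotangent=pullback} (this is essentially the computation the paper defers to Step 3 of the proof of Theorem \ref{thm : integrability_down}), and propagate nondegeneracy from the units. The propagation fact you flag as the main obstacle is true, standard, and elementary: if $\iota_v\omega_E=0$ at $g$, then $\iota^*\omega_E=-\omega_E$ and surjectivity of $\mathbf{m}_*$ show that $u_*\mathbf{t}_*v$ and $u_*\mathbf{s}_*v$ lie in $\ker\omega_E$ at units, so $v\in\ker\mathbf{s}_*\cap\ker\mathbf{t}_*$; then $\mathbf{m}_*(v,0_{g^{-1}})$ is a kernel vector at a unit which is the right translate of $v$, forcing $v=0$ --- no closedness or source-connectedness is needed. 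Your route buys an explicit link with Lemma \ref{pro : cotangent=pullback}; the paper's buys the forward-Dirac statement for $(\mathbf{t},\mathbf{s})$, which is literally what ``symplectic groupoid over $(E,\pi_X^{\sigma})$'' demands, with no propagation lemma. One point to pin down in your version: the relative sign in $\mu_{\omega_E}(U,\eta)=p^*\eta-\sigma^{\sharp}(U)$ is absorbed by identifying $A_X\cong T^*X$ via $(-\omega_X)^{\sharp}$, as the paper does; left loose, an overall sign error would yield $-\pi_X^{\sigma}$ on the base.

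Your closing ``by hand'' alternative, however, rests on a false premise: fibrewise nondegeneracy of $\sigma$ on $\mathbb{V}=\ker p_*$ is \emph{not} what makes $\pi_X^{\sigma}$ Poisson, and it does not follow from the hypotheses of the proposition, which only require $p$ to be a surjective submersion and $p^*(L_{\pi_X})^{\sigma}$ to be the graph of a Poisson bivector; what Poisson-ness is actually equivalent to is injectivity of the total map $(U,\eta)\mapsto\sigma^{\sharp}(U)+p^*(\eta)$ of Lemma \ref{pro : cotangent=pullback}. Concretely, take $X=\mathbb{R}^2$ with the symplectic structure $\pi_X=\partial_{z_1}\wedge\partial_{z_2}$, $E=X\times\mathbb{R}^2$ with fiber coordinates $(x,y)$, and $\sigma=dx\wedge dz_1+dy\wedge dz_2$: then $\sigma$ vanishes identically on the fibers of $p$, yet $p^*(L_{\pi_X})^{\sigma}$ is the graph of a constant, hence Poisson, bivector. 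In this example $\omega_E$ restricts to \emph{zero} on $\ker\mathbf{p}_*$, so your decomposition into $\ker\mathbf{p}_*$ and an $\omega_E$-orthogonal complement collapses; nondegeneracy comes entirely from cross-pairings, which is exactly why the paper's kernel estimate runs through $\omega_X$ (via Lemma \ref{lem : Dirac}(c)) rather than through $\sigma$. In the Poisson-transversal application one does have $\sigma|_X=-w_X$ nondegenerate on the fibers along $X$, hence fibrewise symplectic near $X$ after shrinking, but the proposition is stated and used in the general set-up, so you should either delete the fallback or restrict its scope explicitly; your primary argument does not need it.
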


The proof of Proposition \ref{pro : model presymplectic groupoid} uses some general remarks about Dirac structures and Dirac maps:

\begin{lemma}\label{lem : Dirac}Consider a commutative diagram of manifolds:
\[
 \xymatrix{
  A \ar[d]_{i} \ar[r]^{j} & B \ar[d]^k\\
  C \ar[r]_{l} & D,
 }
\]
where $A$ and $TA$ are identified with the set-theoretical pullbacks $A\cong B\times_DC$, and $TA\cong TB\times_{TD} TC$ (e.g., if $k:B\to D$ and $l:C\to D$ are transverse maps). Assume further that the manifolds above are endowed with Dirac structures: $L_A$ on $A$, $L_B$ on $B$, $L_C$ on $C$, and $L_D$ on $D$.
\begin{enumerate}[(a)]
\item  If $k$ and $i$ are backward Dirac maps, and $l$ is forward Dirac, then $j$ is also forward Dirac.
\item If $j:(A,L_A)\to (B,L_B)$ is forward Dirac, and $\omega$ is a closed two-form on $B$, then $j$ is also a forward Dirac map between the gauge-transformed Dirac structures: $j:(A,L_A^{j^*(\omega)})\to (B,L_B^{\omega})$.
\item If $L_A$ is the graph of a closed two-form $\omega$ on $A$, and $L_B$ is the graph of a Poisson structure $\pi$ on $B$, and $j:(A,L_{A})\to (B,L_{B})$ is forward Dirac, then $\ker(\omega)\subset \ker(j_{\ast})$.
\end{enumerate}
\end{lemma}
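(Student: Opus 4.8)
The plan is to prove the three parts of Lemma~\ref{lem : Dirac} in order, as each is a local statement about the relevant Dirac structures that I can verify pointwise. Recall that for a map $f\colon(P,L_P)\to(Q,L_Q)$, being \emph{forward Dirac} at a point means $L_{Q,f(p)}=\{(f_*v,\beta)\colon (v,f^*\beta)\in L_{P,p}\}$, while being \emph{backward Dirac} means $L_{P,p}=\{(v,f^*\beta)\colon (f_*v,\beta)\in L_{Q,f(p)}\}$. Throughout, I will use the identifications $A\cong B\times_D C$ and $TA\cong TB\times_{TD}TC$, so that a tangent vector to $A$ is a pair $(v_B,v_C)$ with $k_*v_B=l_*v_C$, and the maps $i,j$ are the two projections.

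\emph{Part (a).} I would verify the forward-Dirac condition for $j$ directly. Take $v_B\in T_bB$ and $\beta\in T_b^*B$; I must show that $(v_B,\beta)\in L_B$ if and only if there exists $(v_B,v_C)\in TA$ with $(\,(v_B,v_C),\,j^*\beta\,)\in L_A$. The key is to chase the hypotheses around the square. Using that $i$ is backward Dirac, $L_A$ is pulled back from $L_C$; using that $l$ is forward Dirac, $L_D$ is pushed forward from $L_C$; and using that $k$ is backward Dirac, $L_B$ is pulled back from $L_D$. Composing these three relations, together with the pullback description of $TA$, should express membership in $L_B$ exactly as the pushforward of $L_A$ under $j$. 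I expect this diagram chase — carefully tracking covectors through $k^*,l^*,j^*$ and vectors through $k_*,l_*,j_*$ while respecting the compatibility $k_*v_B=l_*v_C$ — to be the main obstacle, since one has to produce the right $v_C$ and check the covector condition matches on the nose.

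\emph{Part (b).} This is essentially a formal unwinding of the definition of gauge transformation. Since $L^\omega=\{(v,\beta+\iota_v\omega)\colon(v,\beta)\in L\}$, and for the pullback $L_A^{j^*\omega}$ the relevant two-form is $j^*\omega$, I would simply substitute: given $(v,\beta)\in L_A$, forward-Dirac-ness of $j\colon(A,L_A)\to(B,L_B)$ produces $(j_*v,\gamma)\in L_B$ with $j^*\gamma=\beta$; then $(v,\beta+\iota_v j^*\omega)\in L_A^{j^*\omega}$ maps forward to $(j_*v,\gamma+\iota_{j_*v}\omega)\in L_B^\omega$, using naturality $\iota_v j^*\omega=j^*(\iota_{j_*v}\omega)$. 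This is routine and I do not anticipate difficulty.

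\emph{Part (c).} Here $L_A=\mathrm{graph}(\omega)=\{(v,\iota_v\omega)\colon v\in TA\}$ and $L_B=\mathrm{graph}(\pi)=\{(\pi^\sharp\beta,\beta)\colon\beta\in T^*B\}$. Take $v\in\ker\omega$, so $(v,0)\in L_A$. Since $j$ is forward Dirac, $(j_*v,\gamma)\in L_B$ for some $\gamma$ with $j^*\gamma=0$. But every element of $L_B$ has covector part determining the vector part via $\pi^\sharp$, and $(j_*v,\gamma)\in\mathrm{graph}(\pi)$ forces $j_*v=\pi^\sharp\gamma$; the forward-Dirac definition also requires the vector part to be $j_*v$ with $j_*v=j_*(\text{the }v\text{ we started with})$, and since the covector data pushes from $(v,0)$ I would conclude $j_*v=\pi^\sharp\gamma$ where $\gamma$ is constrained so that $j_*v$ comes from $v$ — comparing these shows $j_*v=0$, i.e. $v\in\ker(j_*)$. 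I would write this out by observing that the pushforward of $(v,0)$ under a forward Dirac map lands in $L_B$ as $(j_*v,\gamma)$ with $j^*\gamma=0$, and that membership in $\mathrm{graph}(\pi)$ together with the zero covector pulled back forces $j_*v=0$; this last implication is short once the definitions are in hand.
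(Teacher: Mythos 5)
Your proposal has genuine gaps, the most serious in part (a), which is the heart of the lemma. What you offer there is a plan, not a proof: you correctly state what must be shown, but the "diagram chase" you defer is precisely where the content lies, and you never identify the key linear-algebra ingredient that makes it work. In the paper's chase one starts with $X_B+\eta_B\in j_*(L_A)$, uses backwardness of $i$ to produce $\eta_C$ with $j^*(\eta_B)=i^*(\eta_C)$, and then needs the \emph{dual of the pullback property} for $TA$: since $T_aA\cong T_bB\times_{T_dD}T_cC$, any pair of covectors with $j^*(\eta_B)=i^*(\eta_C)$ factors through a single $\eta_D\in T^*_dD$ with $\eta_B=k^*(\eta_D)$ and $\eta_C=l^*(\eta_D)$. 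This factorization is what lets the forward hypothesis on $l$ and the backward hypothesis on $k$ be applied in sequence; without it (or the symmetric chase in the other direction: backwardness of $k$ yields $\eta_D$, forwardness of $l$ produces the needed $v_C$ with $l_*v_C=k_*v_B$, backwardness of $i$ concludes), your "composing these three relations should express membership in $L_B$" is a restatement of the goal. Relatedly, you set out to prove an equivalence, but nowhere note the reduction the paper uses at the start of both (a) and (b): all the subspaces involved are Lagrangian, so by counting dimensions a single inclusion ($j_*L_A\subset L_B$, or its reverse) suffices.

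There is also a recurring misconception about pushforwards that infects (b) and (c): forwardness of $j$ does \emph{not} produce, for an arbitrary $(v,\beta)\in L_A$, a covector $\gamma$ with $j^*\gamma=\beta$ and $(j_*v,\gamma)\in L_B$ --- only elements whose covector part lies in the image of $j^*$ push forward (take $B$ a point: $L_B=0$ is the forward image of any $L_A$, yet no nonzero $\beta$ is a pullback). In (b), once this is repaired (start from $(v,j^*\gamma')\in L_A^{j^*\omega}$), your substitution proves only the inclusion $j_*(L_A^{j^*\omega})\subset L_B^{\omega}$, and you need the maximality remark to upgrade it to equality; the paper instead proves the opposite inclusion, starting from $X_B+\eta_B\in L_B^{\omega}$ and using forwardness to find $X_A$ with $j_*X_A=X_B$ and $X_A+j^*(\eta_B-\iota_{X_B}\omega)\in L_A$. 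In (c) the slip makes your argument stall where you sense it does: knowing only $j^*\gamma=0$ leaves $\gamma$ possibly nonzero, and then $j_*v=\pi^{\sharp}\gamma$ need not vanish, so "comparing these" does not yield $j_*v=0$. The correct (and the paper's) move is shorter: since $\iota_v\omega=0$, we have $(v,j^*0)=(v,0)\in L_A$, so by the very definition of the forward image $(j_*v,0)\in j_*L_A=L_B=\mathrm{graph}(\pi)$, forcing $j_*v=\pi^{\sharp}(0)=0$. So (c) is salvageable by choosing $\gamma=0$ outright, (b) needs the dimension argument made explicit, and (a) needs the actual chase, including the covector-factorization step, written out.
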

\begin{proof}
(a) Observe that, counting dimensions, it suffices to show that $j_{\ast}L_{A} \subset L_B$. Fix then $a \in A$, and set $b:=j(a)$, $c:=i(a)$ and $d:=k(b)=l(c)$. To further simplify the notation, we also let $L_a:=L_{A,a}$, $L_b:=L_{B,b}$, $L_c:=L_{C,c}$, and $L_d:=L_{D,d}$.

Choose $X_B+\eta_B \in j_*(L_{a})$. This means that $X_B=j_*(X_A)$, for some vector $X_A$ with $X_A + j^*(\eta_B)\in L_{a}$. Since $i$ is a backward Dirac map, there is a covector $\eta_C$ so that $j^*(\eta_B)=i^*(\eta_C)$ and $i_*(X_A)+\eta_C\in L_c$. Since $i^*(\eta_C)=j^*(\eta_B)$, the dual of the pullback property for $TA$ implies that there is a covector $\eta_D\in T^*_dD$, with $\eta_C=l^*(\eta_D)$ and $\eta_B=k^*(\eta_D)$. Since $l$ is a forward Dirac map, we have that $l_*(X_C)+\eta_D \in L_d$. Commutativity of the diagram implies that $l_*(X_C)=k_*(X_B)$. Thus $k_*(X_B)+\eta_D \in L_d$, and $k^*(\eta_D)=\eta_B$. Finally, since $k$ is a backward Dirac map, $X_B + \eta_B \in L_b$. Hence $X_B+\eta_B \in L_b$, and the conclusion follows.

\noindent (b) Note that, again by dimensional reasons, we need only show that $L_b^{\omega}\subset j_*(L_a^{j^*(\omega)})$. Choose $a\in A$ and set $b:=j(a)$, $L_a:=L_{A,a}$ and $L_B:=L_{B,b}$. Consider $X_B + \eta_B \in L_b^{\omega}$. This means that $X_B + \eta_B-\iota_{X_B}\omega \in L_b$. Since $j$ is a forward Dirac map, there is a vector $X_A$ with $X_B=j_*(X_A)$ and $X_A + j^*(\eta_B-\iota_{X_B}\omega)\in L_a$. Clearly, $j^*(\iota_{X_B}\omega)=j^*(\iota_{j_*(X_A)}\omega)=\iota_{X_A}j^*(\omega)$. Hence $X_A + j^*(\eta_B)-\iota_{X_A}j^*(\omega) \in L_a$, and so $X_A+ j^*(\eta_B) \in L_a^{j^*(\omega)}$. This shows that $X_B + \eta_B \in j_*(L_a^{j^*(\omega)})$.

\noindent (c) If $V\in \ker(\omega)$, then $V \in L_{\omega}$. But $j$ forward Dirac implies $j_*(V) \in L_{\pi}$, and therefore $j_*(V)=0$.
 \end{proof}

\begin{proof}[Proof of Proposition \ref{pro : model presymplectic groupoid}]
We apply Lemma \ref{lem : Dirac} (a) to the pullback diagram (\ref{diag : pullback diagram}), where these manifolds have the following Dirac structures : \[(X,-\pi_X)\times (X,\pi_X),\ (E, p^*(L_{-\pi_X}))\times (E, p^*(L_{\pi_X})),\  (\G_X,\omega_X),\ (\G_X^E, \mathbf{p}^*(\omega_X)).\]
We deduce that the map
\[(\mathbf{t},\mathbf{s}):(\G_X^E,\mathbf{p}^*(\omega_X))\rmap (E, p^*(L_{-\pi_X}))\times (E, p^*(L_{\pi_X}))\]
is forward Dirac. By Lemma \ref{lem : Dirac} (b), this map is forward Dirac also after gauge-transformations: \[(\mathbf{t},\mathbf{s}):(\G_X^E, \omega_E)\rmap (E,-\pi_X^{\sigma})\times (E,\pi_X^{\sigma}).\]
It remains to show that $\omega_E$ is nondegenerate. As $\G_X^E=E\times_X\G_X\times_X E$, we obtain that its tangent bundle is the pullback $T\G_X^E=TE\times_{TX}T\G_X\times_{TX} TE$. Explicitly:
\[T\G_X^E=\{(A,B,C)\in TE\times T\G_X\times TE : p_{\ast}(A)=\mathbf{t}_{\ast}(B), \mathbf{s}_{\ast}(B)=p_{\ast}(C)\}.\]
In this decomposition, we can write
\begin{equation}\label{eq : formula omega_E}
\omega_E((A,B,C),(A',B',C'))=-\sigma(A,A')+\omega_X(B,B')+\sigma(C,C').
\end{equation}
By Lemma \ref{lem : Dirac} (c),
\[\ker(\omega_E)\subset \ker(\mathbf{s}_{\ast})\cap\ker(\mathbf{t}_{\ast})=\{(0,B,0) : \mathbf{s}_{\ast}(B)=0, \ \mathbf{t}_{\ast}(B)=0\}\]
But, for $(0,B,0)\neq 0$ we have that $\iota_{(0,B,0)}\omega_E=\mathbf{p}^*(\iota_B\omega_X)\neq 0$, because $\omega_X$ is nondegenerate. Hence $\omega_E$ is nondegenerate. Thus $(\G_X^E,\omega_E)$ is a symplectic groupoid integrating $(E,\pi_X^{\sigma})$.
\end{proof}

\subsection*{The normal form theorem}

We are now ready to prove that the structure of a symplectic groupoid around a Poisson transversal is described by the pullback construction:

\begin{theorem}[Normal form for symplectic groupoids]\label{thm : integrability_down}
Let $(\Sigma,\Omega)\rr (M,\pi)$ be a symplectic groupoid, and let $(X,\pi_X)$ be a Poisson transversal in $M$. Let $p:E\to X$ be a tubular neighborhood on which the normal form holds: $\pi|_E=\pi_X^{\sigma}$, for some closed two-form $\sigma$ on $E$, satisfying $\sigma^{\sharp}(U)=0$ for all $U\in TX$. Denote
\[\G_X:=\Sigma|_X,\ \ \omega_X:=\Omega|_{\G_X}, \ \  \Sigma_E:=\Sigma|_E, \ \ \Omega_E:=\Omega|_{\Sigma_E}.\]
Then the Lie algebroid isomorphism $TE\times_{TX}T^*X\cong T^*E$ described in Lemma \ref{pro : cotangent=pullback} integrates to an isomorphism of symplectic groupoids $\Psi:(\G_X^E,\omega_E)\cong (\Sigma_E,\Omega_E)$.
\end{theorem}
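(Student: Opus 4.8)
The plan is to realize the restricted groupoid $\Sigma_E$ explicitly as the pullback groupoid $\G_X^E$, by conjugating $\G_X=\Sigma|_X$ with a suitable ``transporter''. First observe that since $E$ is open in $M$, the restriction $\Sigma_E=\Sigma|_E=\mathbf{s}^{-1}(E)\cap\mathbf{t}^{-1}(E)$ is an open subgroupoid and $\Omega_E=\Omega|_{\Sigma_E}$ is symplectic; hence $(\Sigma_E,\Omega_E)\rr(E,\pi_X^{\sigma})$ is a symplectic groupoid integrating $\pi_X^{\sigma}$, exactly like the model $(\G_X^E,\omega_E)$ of Proposition \ref{pro : model presymplectic groupoid}. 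The central object will be a smooth map $\tau:E\to\Sigma$ with $\mathbf{t}\circ\tau=\mathrm{id}_E$, $\mathbf{s}\circ\tau=p$ and $\tau|_X=\mathbf{1}$, so that $\tau_e$ is an arrow from $p(e)$ to $e$ reducing to the unit over $X$.

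Given such a $\tau$, I would set
\[
\Psi:\G_X^E\to\Sigma_E,\qquad \Psi(e',g,e):=\tau_{e'}\,g\,\tau_e^{-1}.
\]
A direct computation shows $\Psi$ is a diffeomorphism onto $\Sigma_E$, with inverse $h\mapsto\bigl(\mathbf{t}(h),\,\tau_{\mathbf{t}(h)}^{-1}\,h\,\tau_{\mathbf{s}(h)},\,\mathbf{s}(h)\bigr)$, and it is a groupoid morphism because the inner transporters telescope: on a composable pair $(e'',h,e')$ and $(e',g,e)$ the middle factor is $\tau_{e'}^{-1}\tau_{e'}=\mathbf{1}$. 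Writing $\Psi$ as the groupoid product of the three legs $\tau_{e'}$, $g$, $\tau_e^{-1}$ and using multiplicativity of $\Omega$ together with $\mathbf{i}^*\Omega=-\Omega$ for the inversion $\mathbf{i}$, one obtains
\[
\Psi^*\Omega_E=\mathbf{p}^*\omega_X+\mathbf{t}^*(\tau^*\Omega)-\mathbf{s}^*(\tau^*\Omega).
\]
Comparing with $\omega_E=\mathbf{p}^*\omega_X+\mathbf{s}^*\sigma-\mathbf{t}^*\sigma$, the map $\Psi$ is a symplectomorphism precisely when $\tau$ is chosen so that $\tau^*\Omega=-\sigma$.

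The whole problem thus reduces to producing a transporter with $\tau^*\Omega=-\sigma$, and this is the step I expect to be the main obstacle. A transporter satisfying only the topological constraints is easy to find: near $X$ each fibre $E_x$ lies in the symplectic leaf through $x$ (the embedded normal directions $NX=\pi^{\sharp}(N^*X)$ are leafwise), so $e$ and $p(e)$ are joined by an arrow of $\Sigma$, and contractibility of the fibres allows one to choose these arrows smoothly with $\tau|_X=\mathbf{1}$. The analytic content is the normalization $\tau^*\Omega=-\sigma$, which I would reach by a Moser-type argument. Both forms are closed and cohomologically trivial on $E$: since $E$ retracts onto $X$ along its fibres, $\tau$ is homotopic to $\mathbf{1}\circ p$ and the Lagrangian unit section gives $[\tau^*\Omega]=0$, while the hypothesis $\sigma^{\sharp}(U)=0$ for $U\in TX$ yields $\iota_X^*\sigma=0$ and hence $[\sigma]=0$. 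After first arranging the correct first-order behaviour of $\tau$ along $X$ so that $\tau^*\Omega|_X=-\sigma|_X$, one deforms $\tau$ through transporters to absorb the remaining exact difference; controlling this deformation within the class of transporters is the delicate point.

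Finally, I would check that $\Psi$ integrates the Lie algebroid isomorphism $\sigma^{\sharp}+p^*$ of Lemma \ref{pro : cotangent=pullback}. This is done by differentiating $\Psi$ along the unit section $E$ and restricting to $\ker\mathbf{s}_*=TE\times_{TX}T^*X$; the first-order normalization of $\tau$ built into the Moser step is exactly what makes $\mathrm{d}\Psi$ recover the map $(U,\eta)\mapsto\sigma^{\sharp}(U)+p^*(\eta)$, which closes the argument and shows that the stated algebroid isomorphism integrates to the symplectic groupoid isomorphism $\Psi:(\G_X^E,\omega_E)\cong(\Sigma_E,\Omega_E)$.
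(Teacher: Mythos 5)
Your reduction is sound and in fact mirrors the skeleton of the paper's proof: the paper also conjugates by a ``transporter'' (there written $e\mapsto\Phi(\tau(e))$ with $\tau(e)=(e,p(e))$), obtains the same formula $\Psi^*\Omega_E=\mathbf{p}^*\omega_X+\mathbf{t}^*(T^*\Omega)-\mathbf{s}^*(T^*\Omega)$ from multiplicativity, and the whole theorem indeed hinges on producing $T:E\to\Sigma_E$ with $\mathbf{t}\circ T=\mathrm{id}$, $\mathbf{s}\circ T=p$, $T|_X=\mathbf{1}$ and $T^*\Omega=-\sigma$. But exactly at this point your argument has a genuine gap: the ``Moser-type'' step is not carried out, and the mechanism you propose cannot work in general. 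Deformations of $T$ within the constraint class $\mathbf{t}\circ T=\mathrm{id}$, $\mathbf{s}\circ T=p$ move only in the directions $\ker\mathbf{s}_*\cap\ker\mathbf{t}_*$, i.e.\ in the isotropy groups of $\Sigma$. When the isotropy is trivial (e.g.\ $\Sigma$ a pair groupoid over a symplectic manifold), the transporter is \emph{uniquely} determined pointwise, so there is literally nothing to deform -- yet the theorem still holds, because $T^*\Omega=-\sigma$ is then an automatic identity. This shows the statement you need is a \emph{rigidity} statement about multiplicative forms, not something reachable by deforming an arbitrary transporter to absorb an exact difference; and even where isotropy is present, since $\ker\mathbf{s}_*$ and $\ker\mathbf{t}_*$ are $\Omega$-orthogonal complements, the variations of $T^*\Omega$ produced by isotropy-valued perturbations are severely constrained, so solvability of your Moser equation within the class of transporters is exactly as hard as the theorem itself.

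The missing idea, which is how the paper closes this step, is to work at the Lie algebroid level and integrate: by Lemma \ref{pro : cotangent=pullback}, $\sigma^{\sharp}:\mathbb{V}\to T^*E$ is an injective Lie algebroid morphism, and composing with $(-\Omega_E^{\sharp})^{-1}:T^*E\diffto A=\ker\mathbf{s}_*|_E$ gives an algebroid map $\mathbb{V}\to A$. Since the submersion groupoid $E\times_XE\rr E$ integrating $\mathbb{V}$ has $1$-connected source fibres (the fibres of the tubular neighborhood), this integrates to a groupoid morphism $\Phi:E\times_XE\to\Sigma_E$ -- this replaces your ad hoc smooth choice of arrows. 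The normalization is then obtained with no deformation at all: $\Phi^*(\Omega_E)$ and $\mathbf{s}^*(\sigma)-\mathbf{t}^*(\sigma)$ are both closed multiplicative two-forms on the source-simply-connected groupoid $E\times_XE$ with the \emph{same} IM-form $\sigma^{\sharp}:\mathbb{V}\to T^*E$, hence they coincide by the Bursztyn--Cabrera correspondence \cite{BursCabr12}. Setting $T(e):=\Phi(e,p(e))$ yields $T^*\Omega=p^*\sigma-\sigma=-\sigma$, where $p^*\sigma=0$ follows from the hypothesis $\sigma^{\sharp}(U)=0$ for $U\in TX$. This same infinitesimal construction also hands you, for free, the first-order behaviour along $X$ that your final step needs: the paper's Step 3 is then a direct computation of $(-\omega_E)^{\sharp}$ showing $\Psi$ integrates $(U,\eta)\mapsto\sigma^{\sharp}(U)+p^*(\eta)$, rather than an appeal to normalizations built into an (unconstructed) Moser deformation.
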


\begin{proof}We split the proof into three steps: constructing $\Psi$ as an isomorphism of Lie groupoids, showing that it is a symplectomorphism, and finally, that it integrates the isomorphism of Lie algebroids $TE\times_{TX}T^*X\cong T^*E$.

\smallskip

\noindent\emph{Step 1: Construction of Lie groupoid isomorphism $\Psi$.}

Let $A$ denote the Lie algebroid of $\Sigma_E$, i.e.\
\[T\Sigma_E|_E=TE\oplus A, \ \ A=\ker(\mathbf{s}_{\ast}).\]
The identification between the Lie algebroid $A$ and the cotangent Lie algebroid $T^*E$ is obtained via the symplectic form:
\[-\Omega_E^{\sharp}:A\diffto T^*E, \ \ -\Omega_E^{\sharp}(u)(v)=-\Omega_E(u,v).\]

By Lemma \ref{pro : cotangent=pullback}, the map $\sigma^{\sharp}:\mathbb{V}\to T^*E$ is an injective Lie algebroid morphism. Note that $\mathbb{V}$ is integrable by the submersion groupoid $E\times_XE\rr E$ of $p:E\to X$. Since $E\times_XE$ has 1-connected $\mathbf{s}$-fibres, the Lie algebroid map
\begin{equation}\label{eq : Lie algebroid map}
(-\Omega_E^{\sharp})^{-1}\circ \sigma^{\sharp}:\mathbb{V}\to A
\end{equation}
integrates to a Lie groupoid map
\[\Phi:E\times_X E\rmap \Sigma_E.\]
For $e\in E$, denote by $\tau(e)\in E\times_X E$ the arrow that starts at $p(e)\in X\subset E$, and ends at $e$: $\tau(e):=(e,p(e))$, and define the map:
\[\Psi: \G_X^E\rmap \Sigma_E, \ \ \Psi(e',g,e):=\Phi(\tau(e'))\cdot g\cdot \Phi(\tau(e))^{-1}.\]
It is straightforward to check that $\Psi$ is an isomorphism of Lie groupoids, with inverse
\[\Theta: \Sigma_E\rmap \G_X^E, \ \ \Theta(\overline{g})=(e',\Phi(\tau(e'))^{-1}\cdot\overline{g}\cdot\Phi(\tau(e)),e),\]
\[\textrm{where} \ \ \ e':=\mathbf{t}(\overline{g}), \ \ e:=\mathbf{s}(\overline{g}).\]

\smallskip

\noindent\emph{Step 2: $\Psi$ is an isomorphism of symplectic groupoids.} We begin with the observation that the identification $TE\times_{TX} T\G_X\times_{TX}TE= T\G_X^E$ can be realized using the multiplication map:
\[TE\times_{TX} T\G_X\times_{TX}TE \ni (U,V,W)\mapsto \textbf{m}_{\ast}\left(\textbf{m}_{\ast}(\tau_{\ast}(U),V),\tau^{-1}_{\ast}(W)\right)\in T\G_X^E.\]
Therefore, for any multiplicative two-form $\eta$, we have that:
\begin{align*}
&\eta((U,V,W),(U',V',W'))=\\
&=\eta|_{E\times_XE}(\tau_{\ast}(U),\tau_{\ast}(U'))+\eta|_{\G_X}(V,V')+\eta|_{E\times_XE}(\tau^{-1}_{\ast}(W),\tau^{-1}_{\ast}(W')).
\end{align*}
Therefore, in order to prove that $\omega_E$ and $\widetilde{\omega}_E:=\Psi^*(\Omega_E)$ coincide, it suffices to show that they have the same restriction to the subgroupoids $\mathcal{G}_X,E \times_X E$, which is what we turn to next.

That $\omega_E|_{\mathcal{G}_X}=\widetilde{\omega}_E|_{\mathcal{G}_X}$ follows by our construction: indeed, we have $\Psi|_{\mathcal{G}_X}=\textrm{id}$ and $\omega_X=\Omega_E|_{\mathcal{G}_X}$; since $\sigma|_X=0$, also $\omega_X=\omega_E|_{\mathcal{G}_X}$, and hence our conclusion.

We next show that $\omega_E|_{E\times_XE}=\widetilde{\omega}_E|_{E\times_XE}$. Regarding $E\times_XE$ as the subgroupoid of $\mathcal{G}_X^E$ consisting of elements $(e', 1_x, e)$, for $p(e')=x=p(e)$, we clearly have $\Psi|_{E\times_XE}=\Phi$, and
\[\omega_E|_{E\times_XE}=\mathbf{s}^*(\sigma)-\mathbf{t}^*(\sigma).\]

Now, $\Phi^*(\Omega_E)$ is a multiplicative two-form on the source-simply connected groupoid $E\times_XE$, and is thus determined by its IM-form \cite{BursCabr12}. The IM-form\footnote{Note that our sign convention is different from that in \cite{BursCabr12}; namely, the IM form corresponding to a closed two-form $\eta$ on a groupoid $\G$, is given by $A\ni V\mapsto \mathbf{u}^*(-\iota_V\eta)$, where $\mathbf{u}:M\to \G$ is the unit map.} corresponding to $\Omega_E$ is simply $-\Omega_E^{\sharp}:A\to T^*E$. Pulling it back via the Lie algebroid map (\ref{eq : Lie algebroid map}) to $\mathbb{V}$, we deduce that the IM-form corresponding to $\Phi^*(\Omega_E)$ is $\sigma^{\sharp}:\mathbb{V}\to T^*E$, which is also the IM form of the multiplicative two-form $\textbf{s}^*(\sigma)-\textbf{t}^*(\sigma)$. We thus conclude that $\textbf{s}^*(\sigma)-\textbf{t}^*(\sigma)=\Phi^*(\Omega_E)$, that
\[\omega_E|_{E\times_XE}=\widetilde{\omega}_E|_{E\times_XE},\]
and that $\Psi$ is an isomorphism of symplectic groupoids.
\smallskip

\noindent\emph{Step 3: $\Psi$ integrates the Lie algebroid isomorphism of Lemma \ref{pro : cotangent=pullback}.} Note that the algebroid of $\mathcal{G}_X$ is given by
\[T\G_X|_X=TX\oplus A_X, \ \ A_X:=\{v\in A: \textbf{t}_{\ast}(v)\in TX\}\subset A|_X; \ \ \]
and the identification of $A_X$ with the cotangent Lie algebroid of $\pi_X$ is given by $(-\omega_X)^{\sharp}:A_X\diffto T^*X$.
Now, the Lie algebroid of $\G_X^E$ is the pullback Lie algebroid $TE\times_{TX}A_X$. Let $\psi$ denote the Lie algebroid map induced by $\Psi$. Consider the commutative diagram of Lie algebroid isomorphisms:
\[
 \xymatrix{
  TE\times_{TX}A_X \ar[d]_{(\textrm{id},(-\omega_X)^{\sharp})}\ar[dr]^{(-\omega_E)^{\sharp}} \ar[r]^{\phantom{12345}\psi} & A \ar[d]^{(-\Omega_{E})^{\sharp}}\\
  TE\times_{TX} T^*X \ar[r]_{\phantom{12345}\varphi } & T^*E,
 }
\]
where the top-right triangle is commutative by the fact that $\Psi^*(\Omega_E)=\omega_E$, and $\varphi$ is defined such that the entire diagram is commutative, i.e.\ \[\varphi:=(\textrm{id},(-\omega_X)^{\sharp})\circ(-\omega_E^{\sharp})^{-1}.\]
We need to check that $\varphi$ is the isomorphism from Lemma \ref{pro : cotangent=pullback}, and for that we need to compute $(\omega_E)^{\sharp}$. At a unit $e\in E$, with $x=p(e)$, there are two decompositions of the tangent space to $\G_X^E$:
\[T_{e}\G_X^E\cong T_eE\oplus T_eE\times_{T_xX}A_{X,x}\cong T_eE\times_{T_xX}T_x\G_X\times_{T_xX} T_eE.\]
In the first decomposition, the first factor is the tangent space to the units, and the second is the Lie algebroid (i.e.\ the tangent space to the source-fibre), whereas the second decomposition is based on the pullback construction of $\G_X^E=E\times_X\G_X\times_X E$. The identification between these decompositions is given by:
\[(U,V,W)\mapsto (U+V,p_*(U)+W,U).\]
Using the expression (\ref{eq : formula omega_E}) of $\omega_E$ with respect to the second decomposition, and the identification above, we compute $\omega_E$ with respect to the first decomposition:
\begin{align*}
\omega_E((0,V,W),(U,0,0))&=\left(s^*(\sigma)-t^*(\sigma)+\mathbf{p}^*(\omega_X)\right)(V,W,0)(U,p_*(U),U)=\\
&=-\sigma(V,U)+\omega_X(W,p_*(U));
\end{align*}
therefore:
\[(-\omega_E)^{\sharp}(V,W)=\sigma(V)-p^*((\omega_X)^{\sharp}(W)).\]
This shows that the diagram commutes for $\varphi(V,\eta)=\sigma^{\sharp}(V)+p^*(\eta)$, which is the map from Lemma \ref{pro : cotangent=pullback}. This finishes the proof.
\end{proof}

\section{Linear Poisson structures}

In this section we write our results explicitly for linear Poisson structures. Our goal is to illustrate Theorems A, B, 1 and 3 in this context, thus recasting and reproving some well-known results in what (we would argue) is their proper setting.

\medskip

Let $(\mathfrak{g},[\cdot,\cdot])$ be a Lie algebra. The dual vector space $\mathfrak{g}^*$ carries a canonical
Poisson structure $\pi_{\mathfrak{g}}$, called the linear Poisson structure. It is defined by
\[\pi_{\mathfrak{g},\xi}:=\xi\circ [\cdot,\cdot]\in \wedge^2\mathfrak{g}^*=\wedge^2T_{\xi}\mathfrak{g}^*.\]
In fact, any Poisson structure on a vector space for which the linear functions form a Lie subalgebra is of this form.

Linear Poisson structures are always integrable. The following construction of a symplectic groupoid integrating $(\mathfrak{g}^*,\pi_{\mathfrak{g}})$ is standard and we recall it to establish the notation. Let $G$ be a Lie group integrating $\mathfrak{g}$. Then a symplectic groupoid integrating $\pi_{\mathfrak{g}}$ is the action groupoid:
\[\left(G\ltimes \mathfrak{g}^*,\Omega_{G}\right)\rightrightarrows(\mathfrak{g}^*,\pi_{\mathfrak{g}})\]
associated to the coadjoint action $(g,\xi)\mapsto\mathrm{Ad}_{g^{-1}}^*\xi$; it carries the symplectic structure:
$\Omega_G\in \Omega^2(G\times\mathfrak{g}^*)$ given by:
\begin{equation}\label{eq : explicit form of Omega_G}
\Omega_G((x,\xi),(y,\eta))_{(g,\xi_0)}=\xi(g^{-1}y)-\eta(g^{-1}x)+\xi_0([g^{-1}x,g^{-1}y]),
\end{equation}
for $(x,\xi),(y,\eta)\in T_{(g,\xi_0)}(G\times\mathfrak{g}^*)=T_gG\times \mathfrak{g}^*$, where $g^{-1}x$ and $g^{-1}y$ denotes the action of $G$ on $TG$. For a detailed exposition (with similar notation) see e.g. \cite[Section 2.4.2]{Thesis}.

\begin{illustration1}
 \begin{enumerate}[a)]
  \item The Poisson manifold $(\mathfrak{g}^*,\pi_{\mathfrak{g}})$ carries a canonical, complete Poisson spray $\mathcal{V}_{\mathfrak{g}}$, whose flow (under the identification $T^*\mathfrak{g}^*=\mathfrak{g}\times\mathfrak{g}^*$) is given by:
  \[\phi_t:T^*\mathfrak{g}^*\rmap T^*\mathfrak{g}^*,\ \ (x,\xi)\mapsto(x,e^{-t\mathrm{ad}_x^*}\xi).\]
  \item Let $\mathcal{O}(\mathfrak{g}) \subset \mathfrak{g}$ be the subspace where the Lie-theoretic exponential map $\exp:\mathfrak{g} \to G$ is a local diffeomorphism. Then the closed two-form:
  \[\Omega_{\mathfrak{g}}:=\int_0^1\phi_t^*\omega_{\mathrm{can}}\mathrm{d}t \\ \in \\ \Omega^2(T^{\ast}\mathfrak{g}^{\ast})\]is symplectic exactly on $\mathcal{O}(\mathfrak{g}) \times \mathfrak{g}^* \subset T^*\mathfrak{g}^*$, and gives rise to the full dual pair:
  \[
  \xymatrix{
  (\mathfrak{g}^*,\pi_{\mathfrak{g}}) &(\mathcal{O}(\mathfrak{g})\times \mathfrak{g}^*,\Omega_{\mathfrak{g}})  \ar[l]_<<<<{\mathrm{pr}_1} \ar[r]^>>>>>{\exp_{\mathcal{V}_{\mathfrak{g}}}} & (\mathfrak{g}^*,-\pi_{\mathfrak{g}})
  }
  \]
  Explicitly:
  \begin{equation}\label{eq : explicit 2 form infi}
\Omega_{\mathfrak{g}}((x,\xi),(y,\eta))_{(x_0,\xi_0)}=\xi(\Xi_{x_0}y)-\eta(\Xi_{x_0}x)+\xi_0\left(\left[\Xi_{x_0}x,\Xi_{x_0}y\right]\right),
\end{equation}
where $\Xi_{x_0}$ is the linear endomorphism of $\mathfrak{g}$ given by:
\begin{equation*}
\Xi_{x_0}(x)=\int_{0}^1e^{-t\mathrm{ad}_{x_0}}(x)\mathrm{d}t=\frac{e^{-\mathrm{ad}_{x_0}}-\mathrm{Id}_{\mathfrak{g}}}{-\mathrm{ad}_{x_0}}(x).
\end{equation*}
\item $X\subset \mathfrak{g}^*$ is a Poisson transversal if and only if, for every $\lambda\in X$, the two-form $\lambda \circ [\cdot,\cdot]$ is nondegenerate on the annihilator of $T_{\lambda}X$. Moreover, under the identification $N^*X=\bigcup_{\lambda\in X} N_{\lambda}^{\ast}X\times\{\lambda\}\subset \mathfrak{g}\times X$, we have a Poisson diffeomorphism in a neighborhood of $X$ given by:
\[\exp_{\mathcal{V}_{\mathfrak{g}}}:(N^*X,\pi_X^{-\Omega_{\mathfrak{g}}|_{N^*X}})\to (\mathfrak{g}^*,\pi_{\mathfrak{g}}) \ \ (x,\lambda)\mapsto e^{-\mathrm{ad}_x^*}\lambda;\]
\item If $f:\mathfrak{g} \to \mathfrak{h}$ is a Lie algebra map, and $Y \subset \mathfrak{h}^*$ is a Poisson transversal, then $X:=(f^*)^{-1}Y \subset \mathfrak{g}^*$ is a Poisson transversal, and $f$ induces a bundle map $F$ fitting into the commutative diagram of Poisson maps:
\[\xymatrix{
 (\mathfrak{h}^*,\pi_{\mathfrak{h}}) \ar[r]^{f^*} & (\mathfrak{g}^*,\pi_{\mathfrak{g}}) \\
 (N^*Y,\pi_{Y}^{-\Omega_{\mathfrak{h}}|_{N^*Y}}) \ar[u]^{\exp_{\V_{\mathfrak{g}}}} \ar[r]_F & (N^*X,\pi_X^{-\Omega_{\mathfrak{g}}|_{N^*X}})  \ar[u]_{\exp_{\V_{\mathfrak{h}}}}}\]
 \end{enumerate}
\end{illustration1}
\begin{proof}[Proof of Illustration 1]
\begin{enumerate}[a)]
 \item The flow $\phi_t$ in the statement has infinitesimal generator the vector field $\mathcal{V}_{\mathfrak{g}}\in \mathfrak{X}^1(\mathfrak{g}\times \mathfrak{g}^*)$ given by:
\[\mathcal{V}_{\mathfrak{g},(x,\xi)}:=(0,-\mathrm{ad}_x^*\xi)=(0,\pi_{\mathfrak{g},\xi}^{\sharp}x)\in \mathfrak{g}\times \mathfrak{g}^*=T_{x}\mathfrak{g}\times T_{\xi}\mathfrak{g}^*,\]
where $\mathrm{ad}_xy=-[x,y]$ since we use right invariant vector fields to define the Lie bracket, and this is clearly a spray.
 \item Since trajectories $\phi_t(x,\xi)$ of $\mathcal{V}_{\mathfrak{g}}$ are cotangent paths, they can be integrated to elements in the Lie groupoid, yielding a \emph{groupoid exponential map}:
\[\mathrm{Exp}_{\mathcal{V}_{\mathfrak{g}}}:T^*\mathfrak{g}^*\rmap G\ltimes \mathfrak{g}^*, \ \ (x,\xi)\mapsto(\exp(x),\xi),\]where $\exp:\mathfrak{g} \to G$ denotes the \emph{Lie-theoretic exponential map}.

On the other hand, the \emph{spray exponential map} $\exp_{\mathcal{V}_{\mathfrak{g}}}$, i.e., the composition of $\phi_1$ with the bundle projection $T^*\mathfrak{g}^* \to \mathfrak{g}^*$, becomes $\mathrm{Exp}_{\mathcal{V}_{\mathfrak{g}}}$ composed with the target map:
\[\exp_{\mathcal{V}_{\mathfrak{g}}}(x,\xi)=e^{-\mathrm{ad}_x^*}\xi.\]

Now, the pullback by $\mathrm{Exp}_{\mathcal{V}_{\mathfrak{g}}}$ of the symplectic structure $\Omega_G$ of (\ref{eq : explicit form of Omega_G}) is given by the formula in Theorem A (see \cite{CrFer04} for details); hence the general considerations above imply that the two-form $\Omega_{\mathfrak{g}}$ is given by:
\begin{equation}\label{eq : equality of forms}
\Omega_{\mathfrak{g}}=(\mathrm{Exp}_{\mathcal{V}_{\mathfrak{g}}})^*\Omega_{G}.
\end{equation}This implies that $\Omega_{\mathfrak{g}}$ is nondegenerate exactly on $\mathcal{O}(\mathfrak{g})\times\mathfrak{g}^*$, and that the following is a commutative diagram of Poisson maps:
\begin{equation}\label{diag : diagram with exps}
\xymatrix{
&(G\times \mathfrak{g}^*,\Omega_{G}) \ar[ld]_s \ar[rd]^{t}\\
(\mathfrak{g}^*,\pi_{\mathfrak{g}}) &(\mathcal{O}(\mathfrak{g})\times \mathfrak{g}^*,\Omega_{\mathfrak{g}})  \ar[l]^<<<<{\mathrm{pr}_1} \ar[r]_>>>>>{\exp_{\mathcal{V}_{\mathfrak{g}}}}\ar[u]|{\exp\times\mathrm{Id}_{\mathfrak{g}^*}} & (\mathfrak{g}^*,-\pi_{\mathfrak{g}})
}
\end{equation}
The explicit formula (\ref{eq : explicit 2 form infi}) for $\Omega_{\mathfrak{g}}$ is obtained by pulling back $\Omega_G$ from (\ref{eq : explicit form of Omega_G}), and we conclude with the observation that the linear endomorphism $\Xi_{x_0}:\mathfrak{g} \to \mathfrak{g}$ is the left translation of the differential of $\exp:\mathfrak{g}\to G$ at $x_0$, and therefore it is invertible precisely on $\mathcal{O}(\mathfrak{g})$.
\item Consider an affine subspace
$\lambda+L$ passing through a point $\lambda\in \mathfrak{g}^*$ and with direction a linear subspace $L\subset \mathfrak{g}^*$. Now, $\lambda+L$ is a Poisson transversal in a neighborhood of $\lambda$ if and only if the following condition is satisfied:
\begin{equation}\label{eq : PT condition, linear structures 1}
\mathfrak{g}^*=L\oplus L^{\circ}\cdot \lambda, \ \ L^{\circ}\cdot \lambda:=\{X\cdot \lambda : X\in L^{\circ}\};
\end{equation}equivalently:
\begin{equation}\label{eq : PT condition, linear structures 2}
\lambda\circ [\cdot,\cdot]|_{L^{\circ}\times L^{\circ}} \textrm{ is a non-degenerate 2-form on }L^{\circ}.
\end{equation}The remaining claims are immediate.
\item The dual map $f^*:(\mathfrak{h}^*,\pi_{\mathfrak{h}})\rmap (\mathfrak{g}^*,\pi_{\mathfrak{g}})$ to a Lie algebra map $f$ is a Poisson map, hence by Lemma \ref{lem : Poisson pulls back Poisson transversals}, $f^*$ is transverse to $X$, $Y:=(f^*)^{-1}(X)$ is a Poisson transversal in $\mathfrak{h}^*$, and $f^*$ restricts to a Poisson map
\[ f^*|_{Y}:(Y,\pi_{Y})\rmap (X,\pi_{X}).\]
Moreover, $f$ restricts to a linear isomorphism between the conormal spaces $f:N^*_{f^*(\mu)}X \diffto N^*_{\mu}Y$, for all $\mu \in Y$. The inverses \[F_{\mu}=\left(f|_{L_{f^*(\mu)}^{\circ}}\right)^{-1}:L_{\mu}^{\circ}\diffto L_{f^*(\mu)}^{\circ}\]can be put together in a vector bundle map $F:N^*Y \diffto N^*X$ covering $f^*: Y \to X$, which is fibrewise a linear isomorphism.

We conclude by showing that the diagram in the statement commutes. Let $(y,\xi)\in N^*Y$. Then $F(y,\xi)=(x,f^*(\xi))\in N^*X$, where $x$ satisfies $y=f(x)$. For any $z\in\mathfrak{g}$, we have:
 \begin{align*}
 \exp_{\mathcal{V}_{\mathfrak{g}}}\left(F(y,\xi)\right)(z)=\exp_{\mathcal{V}_{\mathfrak{g}}}((x,f^*\xi))(z)=\left(e^{-\mathrm{ad}_x^*}f^*\xi\right)(z)=\\
 =\xi(f(e^{-\mathrm{ad}_x}z))=\xi(e^{-\mathrm{ad}_{f(x)}}f(z))=\xi(e^{-\mathrm{ad}_{y}}f(z))=\\
 =f^*(e^{-\mathrm{ad}_y^*}\xi)(z)=f^*(\exp_{\mathcal{V}_{\mathfrak{g}}}(y,\xi))(z),
 \end{align*}
where we have used that $f$ is a Lie algebra map. Since $f^*$ and the vertical maps are Poisson maps, it follows that also $F$ is Poisson around $Y$.\qedhere
\end{enumerate}
\end{proof}

Our next illustration concerns the specialization of Theorem \ref{thm : integrability_down} for Poisson transversals complementary to coadjoint orbits, in the particularly convenient setting where the coadjoint action is \emph{proper} at the orbit.

\begin{illustration2}
Let $\mathfrak{g}$ be a Lie algebra satisfying the following \emph{splitting condition} at $\lambda\in \mathfrak{g}^*$: there is a decomposition
\begin{equation}\label{eq: spilitting condition (implying linearizability)}
\mathfrak{g}=\mathfrak{g}_{\lambda}\oplus c,
\end{equation}where $\mathfrak{g}_{\lambda}$ the isotropy Lie algebra at $\lambda$, satisfying $[\mathfrak{g}_{\lambda},c]\subset c$. Then:
\begin{enumerate}[a)]
 \item Along $\widetilde{X}:=\lambda+\mathfrak{g}_{\lambda}^*$, the Poisson tensor $\pi_{\mathfrak{g}}$ decomposes as:
 \begin{equation}\label{eq : decomposition of affine Lie-Dirac}
(\lambda+\xi)\circ \pi_{\mathfrak{g}}=\xi\circ \pi_{\mathfrak{g}_{\lambda}}+(\lambda+\xi)\circ \pi_c\in \wedge^2\mathfrak{g}_{\lambda}^*\oplus \wedge^2c^*
\end{equation}where we identify $\mathfrak{g}_{\lambda}^*=c^{\circ}$;
\item $\widetilde{X}$ intersects all coadjoint orbits cleanly and symplectically, and hence inherits an induced Poisson structure $\pi_{\widetilde{X}}$;
\item $\pi_{\widetilde{X}}$ is globally linearizable through the Poisson isomorphism: \[\tau_{\lambda}:(\mathfrak{g}_{\lambda}^*,\pi_{\mathfrak{g}_{\lambda}})\diffto (\widetilde{X}, \pi_{\widetilde{X}}), \ \ \tau_{\lambda}(\xi)=\xi+\lambda;\]
\item The subspace $X \subset \widetilde{X}$ where $\widetilde{X}$ is a Poisson transversal contains $\lambda$, and for a product neighborhood of the origin $V\times W\subset c\times\mathfrak{g}^*_{\lambda}$, the following map is an open Poisson embedding onto a neighborhood of $\lambda$:
\begin{equation}\label{eq : open embedding}
\left(V\times W, \pi_{\mathfrak{g}_{\lambda}}^{\sigma_{\lambda}}\right)\hookrightarrow \left(\mathfrak{g}^*,\pi_{\mathfrak{g}}\right), \ \ (x,\xi)\mapsto e^{-\mathrm{ad}_x^*}(\lambda+\xi),
\end{equation}
where $\sigma_{\lambda}$ is the pullback of $-\Omega_{\mathfrak{g}}$ via the map:
\[c\times\mathfrak{g}^*_{\lambda}\to \mathfrak{g}\times \mathfrak{g}^*, \ \ (x,\xi)\mapsto (x,\lambda+\xi);\]
\item If a Lie group $G$ integrating $\mathfrak{g}$ acts properly at $\lambda$, and $G_{\lambda}$ denotes the isotropy group at $\lambda$, then, by shrinking $W\subset \mathfrak{g}_{\lambda}^*$ if need be, the restriction of the symplectic groupoid $G\ltimes\mathfrak{g}^*$ to the image of the map (\ref{eq : open embedding}) is isomorphic to the product of the groupoid $G_{\lambda}\ltimes W\rr W$ with the pair groupoid $V\times V\rr V$, with symplectic structure:
\[\left(V\times\left(G_{\lambda}\ltimes W\right)\times V, \mathbf{s}^*(\sigma_{\lambda})+\mathbf{p}^*(\Omega_{G_{\lambda}})-\mathbf{t}^*(\sigma_{\lambda})\right) \rr \left(V\times W,\pi_{\mathfrak{g}_{\lambda}}^{\sigma_{\lambda}}\right),\ \ \textrm{where}\]
\[\mathbf{s}(y,(g,\xi),x)=(x,\xi), \ \ \mathbf{p}(y,(g,\xi),x)=(g,\xi), \ \
\mathbf{t}(y,(g,\xi),x)=(y,\mathrm{Ad}_{g^{-1}}^*\xi).\]
\end{enumerate}
\end{illustration2}

\begin{remark}\rm
It was first proved in \cite{Molino} that the splitting condition (\ref{eq: spilitting condition (implying linearizability)}) implies that the transverse Poisson structure to the coadjoint orbit at $\lambda$ is linearizable, see also \cite{Wein85E}.

Submanifolds which intersect the symplectic leaves cleanly and symplectically, and for which the induced bivector is smooth, are called {\bf Poisson-Dirac} \cite{CrFer04}. In fact, the affine submanifold $\lambda+\mathfrak{g}_{\lambda}^*$ turns out to be a {\bf Lie-Dirac} submanifold (also called ``Dirac submanifold''), see \cite[Example 2.18]{Xu}.
\end{remark}
\begin{proof}[Proof of Illustration 2]
 Since $[\mathfrak{g}_{\lambda},\mathfrak{g}_{\lambda}]\subset \mathfrak{g}_{\lambda}$ and $[\mathfrak{g}_{\lambda},c]\subset c$ we have that, in the decomposition:
\[\pi_{\mathfrak{g}}=\pi_{\mathfrak{g}_{\lambda}}+\pi_{m}+\pi_c,\]
corresponding to (\ref{eq : decomposition of affine Lie-Dirac}), the components satisfy:
\[
\pi_{\mathfrak{g}_{\lambda}}\in \mathfrak{g}_{\lambda}\otimes \wedge^2 \mathfrak{g}_{\lambda}^*, \ \ \ \ \pi_{m}\in c\otimes (\mathfrak{g}_{\lambda}^*\otimes c^*), \ \ \ \
\pi_c\in\mathfrak{g}\otimes  \wedge^2 c^*.\]
The fact that $\mathfrak{g}_{\lambda}$ is precisely the isotropy Lie algebra at $\lambda$ is equivalent to:
\begin{equation}\label{eq : conditions for isotropy}
\lambda\circ \pi_{\mathfrak{g}_{\lambda}}=\lambda\circ \pi_m=0, \ \ \lambda\circ \pi_{c}\in \wedge^2c^* \textrm{ is nondegenerate}.
\end{equation}
Hence, on the affine space $\widetilde{X} = \lambda+\mathfrak{g}_{\lambda}^*$ the Poisson bivector takes the form (\ref{eq : decomposition of affine Lie-Dirac}). This proves a), from which b) and c) follow.

The claim in d) that $X \subset \widetilde{X}$ contains $\lambda$ is immediate. Write $X = \lambda+U$, where
\[U:=\left\{\xi\in \mathfrak{g}_{\lambda}^* : (\lambda+\xi)\circ\pi_{c}\in \wedge^2c^* \textrm{ is nondegenerate}\right\}\subset\mathfrak{g}_{\lambda}^*.\]
Observe that $N^{\ast}X=c\times X$ and, by part a), $NX=c^*\times X$. We thus recognize in (\ref{eq : decomposition of affine Lie-Dirac}) the decomposition (\ref{eq : splitting}) of $\pi_{\mathfrak{g}}$ along the Poisson transversal $X$ into tangential and normal components. The remaining claim in d) is the conclusion of Theorem B around $\lambda$, for a product neighborhood $V\times W\subset c\times\mathfrak{g}^*_{\lambda}$ of the origin with $W\subset U$.

As for e), note that the properness assumption implies that the group $G_{\lambda}$ is compact, that the coadjoint orbit through $\lambda$ is closed, and that the splitting (\ref{eq: spilitting condition (implying linearizability)}) can be assumed to be $G_{\lambda}$-invariant. This assumption not only implies that the transverse Poisson structure is linearizable, but also that the Poisson manifold $(\mathfrak{g}^*,\pi_{\mathfrak{g}})$ is linearizable around the coadjoint orbit through $\lambda$ in the sense of \cite{Vorobjev}, see \cite[Example 2.7]{CrMar12}.

By the slice theorem for proper group actions, one can assume (by shrinking $W\subset \mathfrak{g}_{\lambda}^*$ if need be) that $\lambda+W$ is $G_{\lambda}$-invariant, and that its saturation is $G$-equivariantly diffeomorphic to $G\times_{G_{\lambda}}W$ via the map $[g,\xi]\mapsto \mathrm{Ad}_{g^{-1}}^*(\lambda+\xi)$. In particular, this implies that the restriction of the action groupoid $G\ltimes \mathfrak{g}^*$ to $\lambda+W$ is isomorphic to the restriction of the action groupoid $G_{\lambda}\ltimes \mathfrak{g}_{\lambda}^*$ to $W$. This holds moreover at the level of symplectic groupoids, and the isomorphism is given by:
\[
\xymatrixrowsep{0.4cm}
\xymatrixcolsep{1.2cm}
\xymatrix{
 (G_{\lambda}\ltimes W,\Omega_{G_{\lambda}}) \ar[r]^{\mathrm{id}\times\tau_{\lambda}}
  \ar@<-.5ex>[d] \ar@<.5ex>[d]
  & (G\ltimes {\mathfrak{g}}^*,\Omega_{G})\ar@<-.5ex>[d] \ar@<.5ex>[d] \\
 (W,\pi_{\mathfrak{g}_\lambda}) \ar[r]_{\tau_{\lambda}} & (\mathfrak{g}^*,\pi_{\mathfrak{g}})}\]
The fact that the restriction of $\Omega_{G}$ is $\Omega_{G_{\lambda}}$ can be easily checked using (\ref{eq : explicit form of Omega_G}) and (\ref{eq : conditions for isotropy}).

A direct application of Theorem \ref{thm : integrability_down} now shows that the restriction of the symplectic groupoid $G\ltimes\mathfrak{g}^*$ to the image of the map (\ref{eq : open embedding}) is isomorphic to the symplectic groupoid from e).
\end{proof}

Recall that a Lie algebra $\mathfrak{h}$ is called a {\bf Frobenius Lie algebra} if the coadjoint orbit through some $\lambda \in \mathfrak{h}^*$ is open.

\begin{illustration3}
Let $\mathfrak{h}$ be a Frobenius subalgebra of a Lie algebra $\mathfrak{g}$, and let $H \subset G$ be connected Lie groups integrating $\mathfrak{h} \subset \mathfrak{g}$. Denote by:
\[r:(\mathfrak{g}^*,\pi_{\mathfrak{g}})\rmap (\mathfrak{h}^*,\pi_{\mathfrak{h}})\]
the Poisson submersion dual to the inclusion, and by $S \subset (\mathfrak{h}^*,\pi_{\mathfrak{h}})$ the (open) symplectic leaf through $\lambda \in \mathfrak{h}^*$.
\begin{enumerate}[a)]
 \item There is an open neighborhood $\mathcal{U}$ of $0$ in $\mathfrak{h}$ so that the two-form on $\mathfrak{h}$
\[ \omega_{\lambda,x_0}(x,y)=-\lambda\left([\Xi_{x_0}x,\Xi_{x_0}y]\right),\]
is nondegenerate on $\mathcal{U}$, and the map:
\begin{equation}\label{eq : open embedding of frobenius}
(\mathcal{U},\omega_{\lambda}^{-1})\rmap (\mathfrak{h}^*,\pi_{\mathfrak{h}}), \ \ x\mapsto e^{-\mathrm{ad}_x^*}\lambda
\end{equation} is a Poisson diffeomorphism onto a neighborhood of $\lambda$ in $\mathfrak{h}^*$;
\item Around the Poisson transversal $X_{\lambda}:=r^{-1}(\lambda)$ there is a global Weinstein splitting of $\pi_{\mathfrak{g}}$ given by the commutative diagram of Poisson maps:
\begin{equation}\label{diag : infi iso}
\xymatrixrowsep{0.6cm}
\xymatrixcolsep{2.2cm}
\xymatrix{
(\mathcal{U}\times X_{\lambda},\omega_{\lambda}^{-1}+\pi_{X_{\lambda}})\ar[r]^{\phantom{12345}(x,\xi)\mapsto e^{-\mathrm{ad}_x^*}\xi}\ar[d]_{\mathrm{pr}_1}  &
(\mathfrak{g}^*,\pi_{\mathfrak{g}})\ar[d]^{r}\\
(\mathcal{U},\omega_{\lambda}^{-1})\ar[r]^{x\mapsto e^{-\mathrm{ad}_x^*}\lambda}& (\mathfrak{h}^*,\pi_{\mathfrak{h}})
}
\end{equation}
\item Theorem \ref{thm : integrability_down} for $X_{\lambda}$ implies that the restriction of the symplectic groupoid $(G\ltimes\mathfrak{g}^*,\Omega_G)$ to the image of (\ref{diag : infi iso}) is isomorphic to the product of the symplectic groupoid $(G\ltimes\mathfrak{g}^*,\Omega_G)|_{X_{\lambda}}$ and the symplectic pair groupoid $(\mathcal{U}\times\mathcal{U},\mathrm{pr}_1^*\omega_{\lambda}-\mathrm{pr}_2^*\omega_{\lambda})$;
\item The Lie-theoretic exponential of $H$, $\exp:\mathfrak{h}\to H$, induces a factorization of diagram
(\ref{diag : infi iso}) through the commutative diagram of Poisson maps:
\begin{equation}\label{diag : comm diag}
\xymatrixrowsep{0.6cm}
\xymatrixcolsep{2cm}
\xymatrix{
(H\times X_{\lambda},\mathrm{d}\widetilde{\lambda}^{-1}+\pi_{X_{\lambda}})/{H_{\lambda}}\ar[r]^{\phantom{1234567}[h,\xi]\mapsto \mathrm{Ad}_{h^{-1}}^*\xi}\ar[d]_{\mathrm{pr}_1}  &
(\mathfrak{g}^*|_S,\pi_{\mathfrak{g}})\ar[d]^{r}\\
(H,\mathrm{d}\widetilde{\lambda}^{-1})/H_{\lambda}\ar[r]^{[h]\mapsto \mathrm{Ad}_{h^{-1}}^*\lambda}& (S,\pi_{\mathfrak{h}})
}
\end{equation}
where $\widetilde{\lambda} \in \Omega^1(H)$ is the left-invariant one-form extending $\lambda$, $H_{\lambda}$ is the stabilizer of $\lambda$, and the horizontal arrows are $H$-equivariant Poisson diffeomorphisms.
\end{enumerate}
\end{illustration3}

Part d) gives a global description of the Poisson structure on the open $\mathfrak{g}^*|_S$, which implies the following:
\begin{corollary}\label{cor1}
The Poisson structure on $\mathfrak{g}^*|_S$ is horizontally nondegenerate for the submersion:
\begin{equation}\label{eq : Poisson fibration}
r:(\mathfrak{g}^*|_{S},\pi_{\mathfrak{g}})\rmap (S,\pi_{\mathfrak{h}}).
\end{equation}
The corresponding Vorobjev triple (see e.g.\ \cite{Vorobjev}) satisfies the following properties:
\begin{enumerate}[(1)]
\item The horizontal distribution is involutive, and is given by the tangent bundle to the $H$-orbits;
\item The horizontal two-form is the pullback of the symplectic form on the leaf $S$;
\item In the decomposition $\pi_{\mathfrak{g}}|_{r^{-1}(S)}=\pi^{\mathrm{v}}+\pi^{\mathrm{h}}$ into vertical and horizontal components, we have that both bivectors are Poisson and commute.
\end{enumerate}
\end{corollary}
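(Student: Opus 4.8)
The plan is to transport the entire Vorobjev structure from the product model furnished by Illustration 3, part d), where every claim is manifest, to $\mathfrak{g}^*|_S$ via the $H$-equivariant Poisson diffeomorphism
\[
\Psi : \bigl(H \times X_{\lambda}, \mathrm{d}\widetilde{\lambda}^{-1}+\pi_{X_{\lambda}}\bigr)/H_{\lambda} \diffto (\mathfrak{g}^*|_S, \pi_{\mathfrak{g}}),
\]
which intertwines $\mathrm{pr}_1$ with $r$ and the left $H$-action on the first factor with the coadjoint $H$-action on $\mathfrak{g}^*|_S$. The point is that upstairs the Poisson structure is a \emph{product} of the fibrewise-symplectic structure $\mathrm{d}\widetilde{\lambda}^{-1}$ on $H$ (nondegenerate since $\mathfrak{h}$ is Frobenius) and the vertical structure $\pi_{X_{\lambda}}$ on the fibre $X_{\lambda}$, so that all the Vorobjev data can be read off directly and then pushed forward.

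First I would verify horizontal nondegeneracy. On the product $H \times X_{\lambda}$ the vertical bundle is $V = TX_{\lambda}$, its annihilator is $V^{\circ}=\mathrm{pr}_1^*T^*H$, and since $\mathrm{d}\widetilde{\lambda}^{-1}$ is symplectic, $\pi^{\sharp}(V^{\circ})=TH$ is a complement to $V$. As $\Psi$ is a Poisson diffeomorphism carrying $\mathrm{pr}_1$ to $r$, this complementarity (hence horizontal nondegeneracy in the sense of \cite{Vorobjev}) persists on $\mathfrak{g}^*|_S$, and the horizontal distribution $\pi_{\mathfrak{g}}^{\sharp}((\ker r_*)^{\circ})$ is the image under $\Psi$ of $TH$. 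I also need to pass everything through the $H_{\lambda}$-quotient: here one checks that $H_{\lambda}$, acting diagonally by right translation on $H$ and by the coadjoint action on $X_{\lambda}$, preserves each factor of the product structure --- right translation by $h_0 \in H_{\lambda}$ fixes $\widetilde{\lambda}$ because $\mathrm{Ad}_{h_0}^*\lambda=\lambda$, and the coadjoint action preserves $\pi_{X_{\lambda}}$ --- so both $\mathrm{d}\widetilde{\lambda}^{-1}$ and $\pi_{X_{\lambda}}$, and in particular the splitting $T(H\times X_\lambda) = TH \oplus TX_{\lambda}$, descend to the quotient.

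For property (1), since $\Psi$ is $H$-equivariant it maps the orbit foliation $\{H\times\{\xi\}\}$ to the foliation of $\mathfrak{g}^*|_S$ by $H$-orbits; as $\mathfrak{h}$ is Frobenius the stabilizer $H_{\lambda}$ is discrete, so these orbits have the full dimension $\dim\mathfrak{h}=\dim S$, and their tangent distribution --- being the image of the anchor of the $\mathfrak{h}$-action --- is involutive. This exhibits the horizontal distribution as the tangent bundle to the $H$-orbits. For property (2), the horizontal two-form upstairs is the fibrewise inverse of $\mathrm{d}\widetilde{\lambda}^{-1}$, namely $\mathrm{pr}_1^*(\mathrm{d}\widetilde{\lambda})$; under the identification $(H,\mathrm{d}\widetilde{\lambda}^{-1})/H_{\lambda}\cong (S,\pi_{\mathfrak{h}})$ of the bottom arrow of diagram~(\ref{diag : comm diag}), $\mathrm{d}\widetilde{\lambda}$ descends to the symplectic form $\omega_S$ of the leaf $S$, so the horizontal two-form equals $r^*(\omega_S)$. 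For property (3), I would note that on the product $\pi_{\mathfrak{g}}$ pulls back to $\mathrm{d}\widetilde{\lambda}^{-1}+\pi_{X_{\lambda}}$ with no mixed term, so the two summands are exactly the horizontal and vertical parts; each is individually Poisson and they commute because they live on independent factors, and both properties are preserved under $\Psi$, which, respecting the splitting into horizontal and vertical subbundles, carries the horizontal summand to $\pi^{\mathrm{h}}$ and the vertical summand to $\pi^{\mathrm{v}}$ by uniqueness of the Vorobjev decomposition.

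The main obstacle I anticipate is not any single computation but the careful bookkeeping of the $H_{\lambda}$-quotient together with the precise matching of the product's transparent horizontal/vertical splitting with the intrinsic Vorobjev decomposition --- in particular checking that the mixed term of $\pi_{\mathfrak{g}}$ relative to the orbit distribution genuinely vanishes (which is exactly horizontal nondegeneracy), so that property (3) concerns the \emph{intrinsic} summands $\pi^{\mathrm{h}},\pi^{\mathrm{v}}$ rather than an ad hoc splitting.
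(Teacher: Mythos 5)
Your proof is correct, and for items (2) and (3) it follows essentially the paper's own argument: both rest on the global model of Illustration 3 d), i.e.\ the $H$-equivariant Poisson diffeomorphism $\Psi$ of diagram (\ref{diag : comm diag}), which exhibits $\pi_{\mathfrak{g}}|_{r^{-1}(S)}$ as the pushforward of the mixed-term-free sum $\mathrm{d}\widetilde{\lambda}^{-1}+\pi_{X_{\lambda}}$, whence $\pi^{\mathrm{v}}=\Psi_*(\pi_{X_{\lambda}})$, $\pi^{\mathrm{h}}=\Psi_*(\mathrm{d}\widetilde{\lambda}^{-1})$, and the horizontal two-form is the pullback of the leafwise symplectic form (the paper phrases this last point slightly differently, via $r$ being Poisson so that $\pi^{\mathrm{h}}$ projects to $\pi_{\mathfrak{h}}$, but the content is the same as your descent of $\mathrm{d}\widetilde{\lambda}$ through the bottom arrow). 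Where you genuinely diverge is in horizontal nondegeneracy and item (1): you transport these through $\Psi$ as well, checking nondegeneracy of $\mathrm{d}\widetilde{\lambda}$ on the model and carrying the splitting $TH\oplus TX_{\lambda}$ through the $H_{\lambda}$-quotient, whereas the paper obtains them intrinsically, without any use of $\Psi$: horizontal nondegeneracy is immediate from Lemma \ref{lem : Poisson pulls back Poisson transversals}, since each point $\mu\in S$ is a Poisson transversal of $(\mathfrak{h}^*,\pi_{\mathfrak{h}})$ ($S$ being an open symplectic leaf), so each fibre $X_{\mu}=r^{-1}(\mu)$ is a Poisson transversal; and item (1) then follows from the one-line computation $N_{\xi}X_{\mu}=\pi_{\mathfrak{g}}^{\sharp}(\mathfrak{h}\times\{\xi\})=\{\mathrm{ad}_x^*\xi : x\in\mathfrak{h}\}$, identifying the horizontal distribution with the tangent spaces to the $\mathfrak{h}$-orbits. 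The paper's route is shorter and produces the explicit formula (\ref{eq : horizontal distribution}), which is reused in the proof of Corollary \ref{cor : at most quadratic}; your route buys uniformity, with every claim read off a single transparent product model, at the cost of the extra bookkeeping you yourself flag --- the $H_{\lambda}$-invariance of both summands and the matching of the product splitting with the intrinsic Vorobjev decomposition --- which you handle correctly, in particular by observing that $\pi^{\sharp}$ applied to the annihilator of the vertical bundle equals $TH$ in the model, so that your splitting is the intrinsic one and item (3) concerns the genuine Vorobjev summands.
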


\begin{remark}
Note that, in general, the open set $\mathfrak{g}^*|_S$ is not saturated; for example, if $\mathfrak{h}$ is the diagonal subalgebra in $\mathfrak{g}=\mathfrak{aff}(1)\oplus \mathfrak{aff}(1)$.
\end{remark}

We note also the following surprising property:

\begin{corollary}\label{cor : at most quadratic}
The induced Poisson structure $\pi_{X_{\mu}}$ on the Poisson transversal $X_{\mu}$ is at most quadratic for the canonical $\mathfrak{h}^{\circ}$-affine space structure on $X_{\mu}$.
\end{corollary}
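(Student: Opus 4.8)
The plan is to compute $\pi_{X_\mu}$ explicitly by Dirac reduction and simply read off its polynomial degree. Choose a basis $e_1,\dots,e_k$ of $\mathfrak{h}$ and a vector-space complement $\mathfrak{m}$ with $\mathfrak{g}=\mathfrak{h}\oplus\mathfrak{m}$, and regard all elements of $\mathfrak{g}$ as linear functions on $\mathfrak{g}^*$ via $u(\xi)=\langle\xi,u\rangle$, so that $\{u,v\}_{\pi_{\mathfrak{g}}}(\xi)=\langle\xi,[u,v]\rangle$. Since $r$ is the restriction map, $X_\mu=r^{-1}(\mu)$ is the common regular level set $\{e_i=\mu(e_i)\}$; its conormal bundle is the constant subbundle $\mathfrak{h}\subset\mathfrak{g}=T^*\mathfrak{g}^*$, and fixing a base point $\widetilde{\mu}\in X_\mu$ writes $X_\mu=\widetilde{\mu}+\mathfrak{h}^{\circ}$. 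The restrictions $m|_{X_\mu}$ for $m\in\mathfrak{m}\cong(\mathfrak{h}^{\circ})^*$ are exactly linear coordinates adapted to the $\mathfrak{h}^{\circ}$-affine structure, since $m(\widetilde{\mu}+\nu)=\langle\widetilde{\mu},m\rangle+\langle\nu,m\rangle$. As $X_\mu$ is a Poisson transversal cut out by the second-class constraints $e_i=\mu(e_i)$, its induced Poisson bracket is the associated Dirac bracket
\[
\{f,h\}_{X_\mu}=\{f,h\}_{\pi_{\mathfrak{g}}}\big|_{X_\mu}-\sum_{i,j}\{f,e_i\}_{\pi_{\mathfrak{g}}}\,(C^{-1})^{ij}\,\{e_j,h\}_{\pi_{\mathfrak{g}}}\big|_{X_\mu},\qquad C_{ij}:=\{e_i,e_j\}_{\pi_{\mathfrak{g}}}.
\]

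The key observation is that the constraint matrix is constant along $X_\mu$. Indeed $C_{ij}(\xi)=\langle\xi,[e_i,e_j]\rangle$, and because $\mathfrak{h}$ is a subalgebra we have $[e_i,e_j]\in\mathfrak{h}$; hence for $\xi\in X_\mu$ the value is $\langle\xi,[e_i,e_j]\rangle=\mu([e_i,e_j])$, independent of $\xi$. That $X_\mu$ is a Poisson transversal is precisely the nondegeneracy of $\mu\circ[\cdot,\cdot]|_{\mathfrak{h}\times\mathfrak{h}}$ — the condition guaranteed by the Frobenius hypothesis on the open leaf $S$ — so $C$ is invertible and $C^{-1}$ is a constant matrix.

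It then remains to count degrees. Applying the formula to the coordinate functions $f=m_1$, $h=m_2$ with $m_1,m_2\in\mathfrak{m}$, each of $\{m_1,m_2\}_{\pi_{\mathfrak{g}}}=\langle\,\cdot\,,[m_1,m_2]\rangle$, $\{m_1,e_i\}_{\pi_{\mathfrak{g}}}$ and $\{e_j,m_2\}_{\pi_{\mathfrak{g}}}$ is a single linear function on $\mathfrak{g}^*$, whose restriction to the affine subspace $X_\mu=\widetilde{\mu}+\mathfrak{h}^{\circ}$ is affine, i.e.\ of degree $\le 1$ in the coordinate $\nu\in\mathfrak{h}^{\circ}$. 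Thus the first term of the Dirac bracket is affine, while the correction term — a constant-coefficient combination of products of two such linear functions — is of degree $\le 2$ in $\nu$. Since $\{m_1|_{X_\mu},m_2|_{X_\mu}\}_{\pi_{X_\mu}}$ are exactly the components of $\pi_{X_\mu}$ in these affine coordinates, the bivector $\pi_{X_\mu}$ is at most quadratic, as claimed.

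I expect the only genuine content to lie in the two structural facts that drive the degree bound: that the transverse Poisson structure coincides with the Dirac reduction by the constraints $r=\mu$, and that the constraint matrix $C_{ij}$ is constant on $X_\mu$ (a direct consequence of $\mathfrak{h}$ being a subalgebra together with the level-set description). Once these are in place the degree count is immediate; alternatively one could extract the same conclusion from the explicit splitting of part d) of the Illustration, but the Dirac-bracket computation is the most economical route.
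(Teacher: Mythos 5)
Your argument is correct, and it reaches the conclusion by a genuinely different route from the paper. The paper's proof stays within the machinery of Corollary \ref{cor1}: using (\ref{eq : horizontal distribution}) it writes the horizontal lift of the Ehresmann connection as $\mathrm{hor}_{\xi}(\mathrm{ad}_x^*\mu)=\mathrm{ad}_x^*\xi$, so that
\[\pi_{X_{\mu},\xi}=\pi_{\mathfrak{g},\xi}-(\wedge^2\mathrm{hor}_{\xi})(\pi_{\mathfrak{h},\mu}),\]
and the degree bound follows because $\pi_{\mathfrak{g},\xi}$ is linear and $\mathrm{hor}_{\xi}$ is affine in $\xi$, while $\pi_{\mathfrak{h},\mu}$ is a fixed bivector. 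Your Dirac-bracket computation is the dual, coordinate-level presentation of the same cancellation: since $de_1,\dots,de_k$ frame $N^*X_{\mu}$ and $C_{ij}=\pi_{\mathfrak{g}}(de_i,de_j)$ is the matrix of the normal component $w_{X_{\mu}}$ in this frame, the correction term $\sum_{i,j}\{f,e_i\}(C^{-1})^{ij}\{e_j,h\}$ reproduces exactly $w_{X_{\mu}}(df,dh)$, i.e.\ the paper's $\pi^{\mathrm{h}}$ along the fiber; this observation also supplies the one-line justification, via the decomposition $\pi|_{X}=\pi_X+w_X$ attached to the splitting (\ref{eq : splitting}), for your structural claim that the induced structure on a Poisson transversal is the Dirac bracket --- a standard fact, but one you assert rather than prove, and worth making explicit. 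What your route buys is self-containedness and slightly greater generality: it uses only that $\mathfrak{h}$ is a subalgebra (whence $C|_{X_{\mu}}=\mu([e_i,e_j])$ is constant) together with the nondegeneracy of $\mu\circ[\cdot,\cdot]|_{\mathfrak{h}\times\mathfrak{h}}$, exactly as in (\ref{eq : PT condition, linear structures 2}), so it applies verbatim to any $\mu$ lying on an open coadjoint orbit of $\mathfrak{h}$ and bypasses entirely the $H$-equivariant global splitting of Illustration 3 d) and Corollary \ref{cor1}, which the paper needs even to define $\pi^{\mathrm{h}}$. What the paper's route buys is conceptual placement: it exhibits the quadratic term as the wedge-square of the (affine-in-$\xi$) horizontal lift of the Vorobjev connection, the datum around which the surrounding results are organized.
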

\begin{remark}\rm
A special case of this corollary appeared in \cite{Oh} when considering the transverse Poisson structure to the coadjoint orbit through an element $\xi\in \mathfrak{g}^*$ for which the isotropy Lie algebra $\mathfrak{g}_{\xi}$ has a complement $\mathfrak{h}$ which is also a Lie algebra. In this case, note that by (\ref{eq : PT condition, linear structures 2}) $\mathfrak{h}$ is a Frobenius algebra whose orbit through $\lambda:=\xi|_{\mathfrak{h}}$ is open, and $X_{\lambda}:=\xi+\mathfrak{h}^{\circ}$ is a Poisson transversal to the coadjoint orbit of complementary dimension. Thus the corollary implies the main result of \cite{Oh}.
\end{remark}

\begin{proof}[Proof of Illustration 3]

Note that $\{\lambda\}$ is itself a Poisson transversal, with conormal bundle $\mathfrak{h}\times\{\lambda\}$, and that the pullback of $\Omega_{\mathfrak{h}}$ under $i_{\mathfrak{h}\times\{\lambda\}}:\mathfrak{h}\to \mathfrak{h}\times\{\lambda\}$, $x\mapsto (x,\lambda)$ is given by $\omega_{\lambda}$. Thus, Theorem B specializes to the diffeomorphism claimed in a).


The conormal bundle of $X_{\lambda}$ is $N^*X_{\lambda}=\mathfrak{h}\times X_{\lambda}$, and the relevant two-form restricted to this space, $\sigma:=-\Omega_{\mathfrak{g}}|_{\mathfrak{h}\times X_{\lambda}}$, is given by
\begin{align*}
 \sigma((x,\xi),(y,\eta))_{(x_0,\xi_0)}&=\eta(\Xi_{x_0}x)-\xi(\Xi_{x_0}y)-\xi_0([\Xi_{x_0}x,\Xi_{x_0}y])=\\
 &=-\lambda([\Xi_{x_0}x,\Xi_{x_0}y]),
\end{align*}where we have used that $TX_{\lambda}=\mathfrak{h}^{\circ}\times X_{\lambda}$, that $\xi_0|_\mathfrak{h}=\lambda$ and that $\Xi_{x_0}(\mathfrak{h})\subset \mathfrak{h}$. Hence $\sigma=\mathrm{pr}_1^*(\omega_{\lambda})$, and (\ref{diag : infi iso}) becomes Theorem \ref{thm : Normal Form for Poisson maps} for the Poisson map $r$ and the canonical sprays. This proves b) in a neighborhood of $\{0\}\times X_{\lambda}$, respectively $\{0\}$. We will conclude that b) holds on the entire $\mathcal{U}\times X_{\lambda}$ after we prove part d).

Part c) is a direct consequence of Theorem \ref{thm : integrability_down}.

The stabilizer group $H_{\lambda}$ of $\lambda$ is discrete, and therefore, the map $h\mapsto h\cdot\lambda=\mathrm{Ad}_{h^{-1}}^*\lambda$ is a local diffeomorphism from $H$ to $S$, inducing the diffeomorphism $H/H_{\lambda}\cong S$. Note also that by (\ref{eq : explicit form of Omega_G}) $-\Omega_{H}|_{H\times\lambda}=\mathrm{d}\widetilde{\lambda}$, where $\widetilde{\lambda}$ is the left invariant one-form extending $\lambda$. Therefore, restricting the right side of (\ref{diag : diagram with exps}) to $H\times\{\lambda\}$, respectively $\mathfrak{h}\times\{\lambda\}$, we obtain the following commutative diagram of local symplectomorphisms:
\begin{equation}\label{eq : triangle}
\xymatrixrowsep{0.6cm}
\xymatrixcolsep{1.2cm}
\xymatrix{
(H,\mathrm{d}\widetilde{\lambda})\ar[rd]^{h\mapsto \mathrm{Ad}_{h^{-1}}^*\lambda}  & \\
(\mathcal{O}(\mathfrak{h}),\omega_{\lambda})\ar[u]^{\exp}\ar[r]^{x\mapsto e^{-\mathrm{ad}_x^*}\lambda}& (S,\pi_{\mathfrak{h}}|_S^{-1})\\
}
\end{equation}
In particular, this shows that $\mathcal{U}\subset \mathcal{O}(\mathfrak{h})$. Also, this implies that we have an induced symplectomorphism:
\[\psi:(H,\mathrm{d}\widetilde{\lambda})/H_{\lambda}\diffto(S,\pi_{\mathfrak{h}}|_S^{-1}).\]
Since $r$ is $H$-equivariant, it follows that $\mathfrak{g}^*|_S$ is $H$-invariant. Moreover, since $S$ is the $\lambda$-orbit of $H$, it follows easily that the map $H\times X_{\lambda}\to \mathfrak{g}^*|_S$, $(h,\xi)\mapsto \mathrm{Ad}_{h^{-1}}^* \xi$ induces an $H$-equivariant diffeomorphism:
\[\Psi:H\times_{H_{\lambda}} X_{\lambda}\diffto \mathfrak{g}^*|_S,\]
which satisfies $r\circ\Psi=\psi\circ\mathrm{pr}_1$. To prove that $\Psi$ is indeed a Poisson isomorphism, note that both Poisson structures are $H$-invariant, and $\Psi$ is $H$-equivariant. Therefore, it suffices to check that $\Psi$ is a Poisson map in a neighborhood of $(H_{\lambda}\times X_{\lambda})/H_{\lambda}$, and this follows from the commutativity of  diagram (\ref{eq : triangle}), and that of diagram (\ref{diag : infi iso}) around $\{0\}\times X_{\lambda}$. On the other hand, we can now reverse the argument: having proven that (\ref{diag : comm diag}) is a commutative diagram of Poisson maps, it follows that (\ref{diag : infi iso}) is a Poisson map on the entire $\mathcal{U}\times X_{\lambda}$, respectively $\mathcal{U}$, and hence b) holds. We conclude that the factorization from d) holds:
\begin{equation*}
\xymatrixrowsep{0.6cm}
\xymatrixcolsep{1.35cm}
\xymatrix{
(\mathcal{U}\times X_{\lambda},\omega_{\lambda}^{-1}+\pi_{X_{\lambda}})\ar[r]^{\exp\times\mathrm{Id}/H_{\lambda}\phantom{12}}\ar[d]_{\mathrm{pr}_1}&
(H\times X_{\lambda},\mathrm{d}\widetilde{\lambda}^{-1}+\pi_{X_{\lambda}})/{H_{\lambda}}\ar[r]^{\phantom{123456789}\Psi}\ar[d]_{\mathrm{pr}_1}  &
(\mathfrak{g}^*|_S,\pi_{\mathfrak{g}})\ar[d]^{r}\\
(\mathcal{U},\omega_{\lambda}^{-1})\ar[r]^{\exp/H_{\lambda}}&
(H,\mathrm{d}\widetilde{\lambda}^{-1})/H_{\lambda}\ar[r]^{\phantom{1223}\psi}& (S,\pi_{\mathfrak{h}})
}\qedhere
\end{equation*}
\end{proof}
\begin{proof}[Proof of Corollary \ref{cor1}]
By Lemma \ref{lem : Poisson pulls back Poisson transversals}, each fiber $X_{\mu}:=r^{-1}(\mu)$, $\mu\in S$, is a Poisson transversal; or equivalently, $\pi_{\mathfrak{g}}$ is horizontally nondegenerate for the map (\ref{eq : Poisson fibration}).

For $\mu\in S$, we have that $N^*X_{\mu}=\mathfrak{h}\times X_{\mu}$. Therefore, the normal bundle is given by the tangent space to the $\mathfrak{h}$-orbits:
\begin{equation}\label{eq : horizontal distribution}
N_{\xi}X_{\mu}=\pi_{\mathfrak{g}}^{\sharp}(N^*_{\xi}X_{\mu})=\pi_{\mathfrak{g}}^{\sharp}(\mathfrak{h}\times \{\xi\})=\{\mathrm{ad}_x^*\xi: x\in \mathfrak{h}\}.
\end{equation}
Since the horizontal distribution is precisely the canonical normal bundle to the fibers, this implies (1).

Diagram (\ref{diag : comm diag}) implies that the Poisson structure on $\mathfrak{g}^*|_S$ decomposes as a sum of two commuting Poisson structures $\pi_{\mathfrak{g}}|_{r^{-1}(S)}=\Psi_*(\pi_{X_{\lambda}})+ \Psi_*(\mathrm{d}\widetilde{\lambda}^{-1})$,
and since $\Psi_*(\pi_{X_{\lambda}})$ is tangent to the fibres of $r$, and $\Psi_*(\mathrm{d}\widetilde{\lambda}^{-1})$ is tangent to the $H$-orbits, it follows that this is precisely the decomposition into vertical plus horizontal bivectors:
\[\pi^{\mathrm{v}}:=\Psi_*(\pi_{X_{\lambda}}), \  \ \ \pi^{\mathrm{h}}:=\Psi_*(\mathrm{d}\widetilde{\lambda}^{-1}),\]
which proves (3). Since $r$ is a Poisson map, it follows that $\pi^{\mathrm{h}}$ projects to $\pi_{\mathfrak{h}}$, and therefore, the inverse of $\pi_{\mathfrak{h}}|_S$ (i.e.\ the symplectic structure on $S$) pulls back to the inverse of $\pi^{\mathrm{h}}$ restricted to annihilator of the fibers (i.e.\ the horizontal two-form). This implies (2) (see also \cite[Proposition 3.6]{Vaisman}).
\end{proof}

\begin{proof}[Proof of Corollary \ref{cor : at most quadratic}]
By (\ref{eq : horizontal distribution}) it follows that the horizontal lift of the corresponding Ehresmann connection is given by:
\begin{equation*}
\mathrm{hor}_{\xi}:T_{\mu}S\rmap N_{\xi}X_{\mu}, \ \ \mathrm{hor}_{\xi}(\mathrm{ad}_x^*\mu)=\mathrm{ad}_x^*\xi, \ \ \xi\in X_{\mu},
\end{equation*}
and so
\[\pi_{X_{\mu},\xi}=\pi_{\mathfrak{g},\xi}-\pi^{\mathrm{h}}_{\xi}=\pi_{\mathfrak{g},\xi}-(\wedge^2\mathrm{hor}_{\xi})(\pi_{\mathfrak{h},\mu}), \ \ \xi\in X_{\mu}.\]
The claim now follows from the fact that the horizontal lift has an affine dependence on $\xi\in X_{\mu}$.
\end{proof}

\end{document}